\documentclass[a4paper,10pt,reqno, english]{amsart}

\usepackage{amsmath,amssymb,amscd,amsthm,amsfonts}
\usepackage{graphicx,subfigure}
\usepackage{hyperref}
\usepackage{dsfont}
\usepackage[nobysame, alphabetic]{amsrefs}
\usepackage{tikz}
\usepackage[capitalise]{cleveref}
\usepackage{mathrsfs}

\newtheorem{theorem}{Theorem}

\newtheorem{corollary}{Corollary}

\newtheorem{definition}{Definition}
\newtheorem{remark}{Remark}

\def\zz{\mathds{Z}}
\def\kk{\mathcal{K}}
\def\LL{\mathscr{L}}

\def\rr{\mathds{R}}

\DeclareMathOperator{\conv}{conv}
\DeclareMathOperator*{\bigast}{\mathop{\scalebox{1.5}{$\ast$}}}

\title{Selection-structure generalizations of the Borsuk--Ulam theorem}

\hypersetup{
  pdftitle={Selection-structure generalizations of the Borsuk--Ulam theorem},
  pdfauthor={Pablo Sober\'on}
}

\author[Sober\'on]{Pablo Sober\'on}\address{Baruch College, City University of New York, One Bernard Baruch Way, New York, NY 10010, United States} 
\email{psoberon@gc.cuny.edu}

\thanks{
The research of P. Sober\'on is supported by NSF CAREER grant DMS-237324 and a PSC-CUNY Trad B award.}

\keywords{Borsuk--Ulam theorem, Ky Fan's combinatorial lemma,
Volovikov theorem, selection structures, colorful theorems,
Radon and Tverberg partitions, matroids, mass partitions}

\subjclass[2020]{Primary 52A35; Secondary 55M20, 55M35, 05E45}

\begin{document}

\begin{abstract}
We prove Borsuk--Ulam-type
results governed by selection structures.  Selection structures
extend the matroidal framework for colorful theorems in discrete geometry and include
non-matroidal examples such as chessboard complexes.  

Motivated by Frick and Wellner's Radon-type strengthening of
Fan's theorem and its colorful variants, we prove
selection-structure analogues whose conclusions are determined by
Radon partitions.  We also prove a prime-power
selection-structure covering version of Volovikov's theorem,
governed by Tverberg partitions.  We include
applications to fair partitions, including selection-structure
versions of the ham sandwich and necklace splitting theorems.
\end{abstract}

\maketitle

\section{Introduction}

The Borsuk--Ulam theorem stands as one of the most emblematic theorems in topological combinatorics \cite{Borsuk1933}.  The applications of this result are remarkable both in scope and elegance \cites{Matousek2003, Zivaljevic2017}.  As a consequence, there have been many generalizations of the Borsuk--Ulam theorem.  Consider the following theorem by Fan.

\begin{theorem}[Fan 1952 \cite{Fan1952}]\label{thm:fan-1952}
    Let $m,d$ be positive integers and $A_1,\dots,A_m$ be closed subsets of $S^d$ such that $A_i \cap (-A_i) = \emptyset$ for all $i \in [m]=\{1,\dots,m\}$ and such that $S^d = \bigcup_{i \in [m]} (A_i \cup (-A_i))$.  Then, there exist $d+1$ numbers $i_1 < i_2 < \ldots < i_{d+1}$ in $[m]$ such that 
    \[
    \bigcap_{j=1}^{d+1} (-1)^jA_{i_j} \neq \emptyset.
    \]
\end{theorem}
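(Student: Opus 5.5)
We plan to deduce Fan's theorem from a combinatorial statement about antipodally symmetric triangulations (Ky Fan's lemma), using compactness to pass from closed sets to a labeling.

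\emph{Step 1: reduction to a labeling.} Since the $2m$ sets $\pm A_i$ are closed and cover $S^d$, and $A_i\cap(-A_i)=\emptyset$, there is $\varepsilon>0$ with $\operatorname{dist}(A_i,-A_i)>2\varepsilon$ for every $i$. Choose a centrally symmetric triangulation $T$ of $S^d$ (for example, a fine subdivision of the boundary of the cross-polytope) with mesh less than $\varepsilon$. Each vertex $v$ gets a label $\lambda(v)\in\{\pm1,\dots,\pm m\}$: take $+i$ if $v\in A_i$ and $-i$ if $v\in -A_i$, choosing antipodally so that $\lambda(-v)=-\lambda(v)$. This is possible because if $v\in A_i$ then $-v\in -A_i$. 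The labeling is then antipodal. It also has no complementary edge: if $\lambda(u)=i$ and $\lambda(w)=-i$ with $uw$ an edge, then $|u-w|<\varepsilon$ while $u\in A_i$ and $w\in -A_i$, a contradiction.

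\emph{Step 2: Ky Fan's lemma.} For an antipodal labeling of a symmetric triangulation of $S^d$ with no complementary edge, we must show there is a $d$-simplex whose labels are $\{k_1,-k_2,k_3,\dots,(-1)^d k_{d+1}\}$ with $k_1<k_2<\dots<k_{d+1}$ in absolute value; an odd number of such alternating simplices should exist, up to the global sign convention. We would prove this via Fan's parity argument, by induction on $d$ using hemispheres. Cover $S^d$ by nested hemispheres $H_j^{\pm}$ (a flag of symmetric subcomplexes $S^0\subset S^1\subset\dots\subset S^d$, which the triangulation is refined to respect). Call a $j$-simplex in $H_j^+$ \emph{alternating} if its labels alternate in sign in increasing absolute value, with the first sign fixed. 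Counting pairs (alternating $(j-1)$-face, $j$-simplex of $H_j^+$ containing it) gives
\[
\#\{\text{alternating }(j-1)\text{-simplices on } S^{j-1}\}\equiv \#\{\text{alternating } j\text{-simplices in } H^{+}_j\}+\#\{\text{alternating }j\text{-simplices in }H^-_j\}\pmod 2 .
\]
Antipodal symmetry makes the hemisphere counts combine correctly. The base case $S^0$ is immediate, so the count is odd at every level.

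\emph{Step 3: back to the sets.} Take a simplex with labels $k_1,-k_2,\dots$ with $|k_1|<\dots$. Its vertices lie respectively in $A_{i_1},-A_{i_2},\dots$, with $i_1<\dots<i_{d+1}$ (possibly after a global sign flip, which we are allowed because both $x$ and $-x$ may be used). Let the mesh tend to $0$ and take a convergent subsequence. Since there are finitely many index choices, one index tuple recurs, and closedness gives a point in $\bigcap_j (-1)^{j}A_{i_j}$, after adjusting the sign convention by replacing the point with its antipode if needed.

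\emph{Main obstacle.} The hardest step is the parity count in Step 2. We must set up the definition of "alternating" and the boundary orientation on the equator so that the double counting really gives an odd number. The key facts are that faces interior to a hemisphere are counted twice, and that the absence of complementary edges prevents degenerate simplices containing both $i$ and $-i$. We would follow Fan's original flag-of-hemispheres argument (equivalently, Freund–Todd's constructive proof) and check the sign bookkeeping carefully.
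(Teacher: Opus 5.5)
Your proof is correct, but it follows a different route from the paper. You use Fan's original combinatorial argument: an antipodal labeling of a symmetric triangulation adapted to a flag of hemispheres, a mod-$2$ double count over each $H_j^+$, and a compactness limit. The paper only cites the theorem and recovers it as the case $r=2$ of its Fan-type covering version of Volovikov's theorem (equivalently, from the Frick--Wellner theorem with the $x_i$ on the moment curve in $\rr^{d-1}$). There one builds an odd map from $S^d$ into the join $\{\pm1,\dots,\pm m\}^{*(d+1)}$ that is compatible with the cover. One then composes it with the linear map $(w,\pm i)\mapsto\pm(x_i,1)\in\rr^d$ and applies Borsuk--Ulam. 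A zero gives a Radon partition of the selected points in which no index carries both signs. Since Radon partitions of $d+1$ points on the moment curve alternate, the pattern $i_1<\dots<i_{d+1}$ follows.

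On the bookkeeping you flag, no orientations are needed, since the count is mod $2$. It suffices to check two things. First, a $j$-simplex of $H_j^+$ has exactly one positive-alternating facet if it is alternating of either sign, and an even number otherwise. The case where the extra vertex repeats an absolute value is exactly where the absence of complementary edges enters. Second, antipodality trades negative-alternating simplices of $H_j^+$ for positive-alternating ones of $H_j^-$. Iterated barycentric subdivisions of the cross-polytope boundary give the required triangulations at every mesh.

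What each approach buys:
\begin{itemize}
\item Your approach is elementary and yields the stronger odd-count statement. It is independent of Borsuk--Ulam, so via the reduction in the introduction it even reproves Borsuk--Ulam. Its cost is that the alternation is hard-wired into the labels and the triangulation must respect the hemisphere flag.
\item The paper's approach uses Borsuk--Ulam as a black box, but the alternation comes from convex geometry. That is what lets the same argument handle arbitrary point sets and covers indexed by a selection structure. With Volovikov's theorem in place of Borsuk--Ulam, it also gives Tverberg-type conclusions for prime powers $r$.
\end{itemize}
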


The reduction of Fan's theorem to Borsuk--Ulam is simple.  If we have an odd map $f:S^d \to \rr^{m}$ without zeros, we can declare $x \in S^d$ to be in $A_i$ if and only if the $i$-th coordinate of $f(x)$ is positive and has the largest absolute value among all coordinates of $f(x)$.  Equivalently, $x \in A_i$ if and only if $f(x)_i = \|f(x)\|_{\infty}$.  The sets $A_1,\dots, A_m$ satisfy the condition of the theorem, and the conclusion is impossible to achieve if $m \le d$.  While the Borsuk--Ulam theorem only provides conditions for such odd maps without zeros to exist, Fan's theorem provides interesting consequences when they do exist, even when $m-d$ is large.

Recently, Frick and Wellner proved several extensions of Fan's theorem, building on their earlier results \cite{Frick2025}, where the coefficients involved are determined by convexity conditions.  The result below implies Fan's theorem if the points $x_1,\dots, x_m$ are placed on the moment curve in $\rr^{d-1}$.

\begin{theorem}[Frick, Wellner 2025 \cite{Frick2025arxiv}]
    Let $m,d$ be positive integers, $\{x_1,\dots,x_m\}$ be a set of $m$ points in $\rr^{d-1}$, and $A_1,\dots, A_{m}$ be closed subsets of $S^d$.  Suppose that for each $i \in [m]$ we have $A_i \cap (-A_i) = \emptyset$, and that $S^d = \bigcup_{i \in [m]}(A_i \cup (-A_i))$.  Then, there exist two disjoint subsets $P_+, P_- \subset [m]$ such that $\conv \{x_i : i \in P_+\}\cap \conv\{x_i : i \in P_-\}\neq \emptyset$ and such that
    \[
    \left(\bigcap_{i \in P_+}A_i\right) \cap \left(\bigcap_{i \in P_-}-A_i\right)\neq \emptyset.
    \]
\end{theorem}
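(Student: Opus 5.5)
Since the $m$ points sit in $\rr^{d-1}$, the proof should be a direct reduction to the Borsuk--Ulam theorem, with the convexity condition entering through a linear map whose kernel corresponds to Radon partitions. Suppose, for contradiction, that no pair $P_+,P_-$ as in the statement exists. Choose a partition of unity $\{\phi_i^+,\phi_i^-\}_{i\in[m]}$ on $S^d$ subordinate to small open neighborhoods $U_i^\pm$ of $A_i$ and $-A_i$. Symmetrize it by setting $\phi_i^-(x)=\phi_i^+(-x)$, and take $U_i^-=-U_i^+$. Define
\[
g(x)=\bigl(\phi_1^+(x)-\phi_1^-(x),\dots,\phi_m^+(x)-\phi_m^-(x)\bigr)\in\rr^m.
\]
Then $g$ is odd. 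It never vanishes, because $\sum_i(\phi_i^++\phi_i^-)=1$ and, for $U_i^\pm$ small enough, $U_i^+\cap U_i^-=\emptyset$ (this uses that $A_i\cap(-A_i)=\emptyset$ and compactness). Hence at least one coordinate of $g(x)$ is nonzero, and the coordinate $g_i(x)$ is positive only on $U_i^+$ and negative only on $U_i^-$.

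Next, compose $g$ with the linear map $L:\rr^m\to\rr^{d}$ given by $L(\lambda)=\bigl(\sum_i\lambda_i x_i,\ \sum_i\lambda_i\bigr)$. The map $L\circ g:S^d\to\rr^d$ is odd and continuous, so by Borsuk--Ulam there is a point $x$ with $L(g(x))=0$. Write $\lambda=g(x)\neq 0$, and set $P_+=\{i:\lambda_i>0\}$ and $P_-=\{i:\lambda_i<0\}$. These sets are disjoint. The condition $\sum\lambda_i=0$ together with $\lambda\ne0$ makes both sets nonempty and gives $\sum_{P_+}\lambda_i=\sum_{P_-}|\lambda_i|=:s>0$. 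Dividing $\sum_i\lambda_ix_i=0$ by $s$ then shows that $\conv\{x_i:i\in P_+\}$ and $\conv\{x_i:i\in P_-\}$ share a point. Moreover, $x\in\bigcap_{P_+}U_i^+\cap\bigcap_{P_-}U_i^-$.

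The main obstacle is passing from the open neighborhoods back to the closed sets $A_i$; I would handle it by a limiting argument. Take neighborhoods $U_i^{+,(k)}$ shrinking to $A_i$, for instance $\varepsilon_k$-neighborhoods with $\varepsilon_k\to0$. Their union with the antipodal sets still covers $S^d$, so each $k$ yields a Radon pair $(P_+^{(k)},P_-^{(k)})$ and a point $x^{(k)}$ as above. There are only finitely many pairs of subsets of $[m]$, so one pair $(P_+,P_-)$ occurs for infinitely many $k$; its convex hulls intersect, because that property does not depend on $k$. Passing to a convergent subsequence $x^{(k)}\to x$ and using that the $A_i$ are closed with $\bigcap_k U_i^{+,(k)}=A_i$, we get $x\in\bigcap_{P_+}A_i\cap\bigcap_{P_-}(-A_i)$, as required. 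One point needs care here: the partition of unity must be built so that $\phi_i^+$ is supported in $U_i^{+,(k)}$ for each $k$. This can be done by taking a partition of unity subordinate to the open cover $\{U_i^{\pm,(k)}\}$ and symmetrizing it, averaging each function with the antipode of its partner and renormalizing, which preserves the supports because the cover is symmetric.
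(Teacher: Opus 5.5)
Your proof is correct, and it takes a genuinely different, more direct route than the paper's. The paper does not prove this statement by itself. It quotes it from Frick--Wellner and recovers it as a special case of \cref{thm:main-selection-frickwellner}, with $W=[d+1]$, $\kk=2^W$, $\LL=\{W\}$, $A^w_i=A_i$ and $x^w_i=x_i$, after which one replaces $P_\pm$ by $\pi_2(P_\pm)\subseteq[m]$.

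The paper's proof never writes down the equivariant map. It takes a fine centrally symmetric triangulation and labels each vertex by the signed indices $\pm i$ of the sets containing it. It then builds $\psi\colon S^d\to|\kk(W;E)|$ skeleton by skeleton, using the $(d-1)$-connectivity of the complexes $\Gamma(\tau)$ to extend over each simplex. Next it composes with the linear map $\Phi$, which is your $L$: on $\pm e_i$ it gives $(\pm x_i,\pm 1)$. Borsuk--Ulam then supplies a zero. In that setup an index can a priori appear with both signs on two different elements of $W$. So a separate mesh-$\delta$ argument is needed to exclude this, and a limit over triangulations plays the role of your limit over $\varepsilon_k$-neighborhoods.

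In your version, the symmetrized partition of unity gives the odd map explicitly into $\rr^m\setminus\{0\}$, in effect into the boundary of the cross-polytope. Since $+i$ and $-i$ share the single coordinate $g_i$ and $U_i^+\cap U_i^-=\emptyset$, opposite signs on one index are excluded automatically. You need no connectivity and no mesh argument.

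The price is that this works only for one family of sets. With several families indexed by $W$, a partition-of-unity combination would use many elements of $W$, and several labels per fiber, at once. Its image would leave $|\kk(W;E)|$, and you would lose the conclusion that $\pi_1$ is injective with image a facet of $\kk$. That is exactly why the paper needs the connectivity-based extension.

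Two minor points. The opening ``suppose for contradiction'' is never used, since your argument is direct. Also, only positivity of $\sum_i(\phi_i^++\phi_i^-)$ matters, not the normalization to $1$.
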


Additionally, Frick and Wellner proved colorful versions of their generalizations of Fan's theorem.  This aligns with the trend in discrete geometry and topological combinatorics, where many classic results have a ``colorful'' generalization.  Examples include colorful versions of Helly's theorem \cite{Barany1982}, Caratheodory's theorem \cite{Barany1982}, and the KKM theorem \cite{Gale1984}.  Colorful results, despite their playful name, are vastly more powerful tools than their standard counterpart.  The colorful conditions in many of these theorems can be replaced by the independence complex of a matroid \cites{Kalai2005, Holmsen2016, mcginnis2024arxiv}.  In this manuscript, motivated by the colorful Fan-type results of Frick and Wellner, we prove selection-structure analogues governed by Radon partitions. These include matroidal, sparse, and nonmatroidal
specializations.

A selection structure is a simplicial complex together with an upward-closed family of sets that can be used to extend colorful results beyond matroids.  In an earlier work, the author introduced selection structures and showed that many colorful results can be generalized to that level, including Helly's theorem, Carath\'eodory's theorem, and the KKM theorem \cite{soberon2026arxiv}.  Selection structures allow us to impose the same colorful conditions that we would get from matroids, but also include non-matroidal complexes such as chessboard complexes and matching complexes.

To state our main result, we first define selection structures.

\begin{definition}
    Let $W$ be a finite set and let $\kk$ be a simplicial complex on $W$.  Let $D=\{D_w : w \in W\}$ be an indexed family of nonempty finite sets, one for each element of $W$.  We refer to the sets in $D$ as fibers.  For $U \subseteq W$, we define $\kk[U] = \{\sigma \in \kk : \sigma \subseteq U\}$ to be the restriction of $\kk$ to $U$.  Then, we define the space
    \[
    \kk(U;D) = \bigcup_{\sigma \in \kk[U]} \left(\bigast_{w \in \sigma} D_w\right)
    \]
    where $\bigast$ denotes an iterated join.
\end{definition}

The complex $\kk(U;D)$ is essentially a copy of $\kk[U]$ in which every vertex $w$ is replaced by its fiber $D_w$, see \cref{fig:expansion}.  Then, every set of vertices of $\kk(U;D)$ from different fibers that maps to a face of $\kk[U]$ is also a face of $\kk(U;D)$.

\begin{figure}
    \centering
    \includegraphics[width=\linewidth]{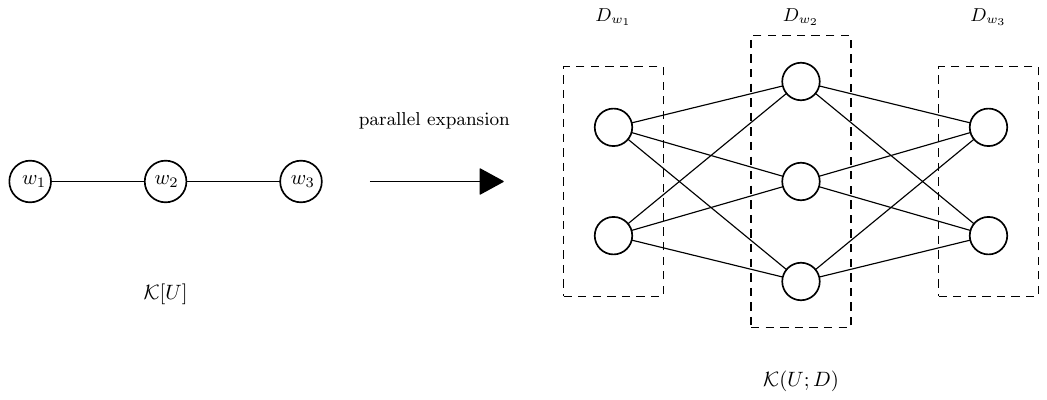}
    \caption{An example of a parallel expansion of a path with three vertices.}
    \label{fig:expansion}
\end{figure}

With this, we are ready to define $d$-admissible selection structures.

\begin{definition}
    Let $d$ be a positive integer.  Let $W$ be a finite set, $\kk$ be a simplicial complex on $W$, and $\LL$ be a nonempty upward-closed family of subsets of $W$.  We say that $\Sigma = (\kk, \LL)$ is a $d$-admissible selection structure if the following two conditions hold.
    \begin{itemize}
        \item For any indexed family $D = \{D_w : w \in W\}$ of nonempty finite sets, and every $U \in \LL$, the complex $\kk(U;D)$ is $(d-1)$-connected.
        \item For every $U \in \LL$, the facets of $\kk[U]$ are also facets of $\kk$.
    \end{itemize}
\end{definition}

Our main result is the following theorem.

\begin{theorem}[Selection-structure Fan--Radon theorem]\label{thm:main-selection-frickwellner}
    Let $d, m$ be positive integers, $W$ be a finite set, and $\Sigma =(\kk,\LL)$ be a $d$-admissible selection structure on $W$.  For every $w \in W$, let $X^w = \{x^w_1,\dots, x^w_m\} \subset \rr^{d-1}$ be a set of points.  For every $w\in W$ and $i\in [m]$, let $A^w_i$ be a closed subset of $S^d$.  Assume that the following two conditions hold:
    \begin{itemize}
        \item For all $W' \subseteq W$ such that $W \setminus W' \not\in \LL$, we have
        \( \displaystyle
        S^d = \bigcup_{w \in W'} \bigcup_{i=1}^{m} \left( A_i^w \cup (-A_i^w)\right), 
        \)
        \item For every pair of different elements $u, w \in W$ such that $\{u,w\} \in \kk$ and every $i \in [m]$, we have $A_i^u \cap (-A_i^w) = \emptyset$.
    \end{itemize}
    Then there exist $x \in S^d$, two nonempty disjoint sets $P_+,P_- \subseteq W \times [m]$ with the following properties.  First,
    \(
    \conv \left(x^{w}_i : (w,i) \in P_+ \right)
    \cap \conv \left(x^{w}_i : (w,i) \in P_-\right) \neq \emptyset.
    \)
    Second, 
    \[
    x \in \left( \bigcap_{(w,i) \in P_+} A_i^{w}\right)\cap \left(\bigcap_{(w,i) \in P_-} \left(- A_i^{w}\right)\right).
    \]
    Third, the projection $\pi_1: P_+ \cup P_- \to W$ onto the first coordinate is injective and $\pi_1 (P_+ \cup P_-)$ is a facet of $\kk$.  The projection $\pi_2: W \times [m] \to [m]$ satisfies $\pi_2(P_+) \cap \pi_2(P_-) = \emptyset$.

\end{theorem}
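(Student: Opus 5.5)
The plan is a configuration-space / test-map argument that reduces the statement to the Borsuk--Ulam theorem. The first move is to lift each point to $\tilde x^w_i=(x^w_i,1)\in\rr^d$. Then, for disjoint $P_+,P_-$, the condition $\conv(x^w_i:(w,i)\in P_+)\cap\conv(x^w_i:(w,i)\in P_-)\neq\emptyset$ is equivalent to $0$ lying in the image of the following map. Take the geometric simplex on the signed vertex set $\{+(w,i):(w,i)\in P_+\}\cup\{-(w,i):(w,i)\in P_-\}$ and send each vertex $\pm(w,i)$ to $\pm\tilde x^w_i$, extending linearly.

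So I would consider the following simplicial complex $\mathcal C$ on the vertex set $W\times[m]\times\{+,-\}$:
\begin{itemize}
\item a face is a signed set $(P_+,P_-)$ with $P_+\cap P_-=\emptyset$ and $\pi_2(P_+)\cap\pi_2(P_-)=\emptyset$;
\item the projection $\pi_1$ is injective on $P_+\cup P_-$, and $\pi_1(P_+\cup P_-)\in\kk$.
\end{itemize}
The map $\sigma\mapsto-\sigma$ (swapping signs) is a free $\zz_2$-action on $\mathcal C$. The linear map above gives an odd map $g:\mathcal C\to\rr^d$. Assume, for contradiction, that the conclusion fails.

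The first step is to build an odd map $f:S^d\to\mathcal C$ from the sets $A^w_i$. I would fix a partition of unity subordinate to small closed neighborhoods of the sets $A^w_i$ and $-A^w_i$, chosen so that the second hypothesis $A^u_i\cap(-A^w_i)=\emptyset$ survives in the neighborhoods. I would also symmetrize the partition so that the resulting map is odd. For each $x$, the vertices $\pm(w,i)$ carrying positive weight should then span a face of $\mathcal C$, and the second hypothesis is exactly what rules out a clash between $+(u,i)$ and $-(w,i)$.

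Two constraints are not automatic: injectivity of $\pi_1$, and the requirement that $\pi_1$ lands in $\kk$ and in fact on a facet. This is where the admissibility of $\Sigma$ enters. The first hypothesis says that the set $W_x$ of $w$ whose fiber does \emph{not} cover $x$ satisfies $W\setminus(\text{complement of }W_x)\in\LL$. I would use this to perform a skeleton-by-skeleton extension over a triangulation of $S^d$ (a $\zz_2$-CW structure), mapping cells into $\kk(U;D)$ with fibers $D_w=[m]\times\{+,-\}$ for suitable $U\in\LL$. The $(d-1)$-connectivity of $\kk(U;D)$ is exactly what allows extension over cells of dimension $\le d$; equivariance is kept by extending on a fundamental domain and then setting $f(-y)=-f(y)$.

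The second admissibility condition, that facets of $\kk[U]$ are facets of $\kk$, is what upgrades ``face of $\kk$'' to ``facet of $\kk$'' in the final signed set. The idea is to push the image into the top-dimensional faces, i.e. those whose support is a facet of $\kk[U]$ and hence a facet of $\kk$. Once $f$ is built, $g\circ f:S^d\to\rr^d$ is odd, so by Borsuk--Ulam it has a zero at some $x$. Let $(P_+,P_-)$ be the support of the face containing $f(x)$, taken to be facet-supported. Then $0\in g(\text{that face})$ gives the convex-hull intersection, and the construction of the partition of unity gives $x\in\bigcap_{P_+}A^w_i\cap\bigcap_{P_-}(-A^w_i)$, after shrinking the neighborhoods and taking a compactness limit. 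Nonemptiness of both $P_+$ and $P_-$ follows because the last coordinate of $g$ is $\sum\lambda-\sum\mu$, which forces both signs to appear.

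The main obstacle is the construction of $f$. The covering hypothesis is given only in the complementary form ``$W\setminus W'\notin\LL$'', and it has to be converted into a uniform choice of $U\in\LL$ on each cell, compatible under the antipodal map and under face inclusions. The cell-by-cell extension must also use only $(d-1)$-connectivity and still land in facet-supported faces. My first attempt would be to index cells by the set of fibers that cover them and take $U$ to be the complement of the non-covering ones, checking monotonicity under face inclusion. If that fails, I would instead replace $S^d$ by the nerve of the cover, which is $\zz_2$-homotopy equivalent to it, and map that nerve combinatorially.
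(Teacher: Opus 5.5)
Your architecture is the paper's: lift the points to $(x^w_i,1)$, map $S^d$ equivariantly into a parallel expansion of $\kk$ cell by cell using $(d-1)$-connectivity, compose with the odd linear map to $\rr^d$, and apply Borsuk--Ulam. The gap is in the step you yourself call the main obstacle, the construction of $f$, and the concrete version you propose would fail.

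\textbf{Why the proposed $f$ does not work.} If each cell is mapped into $\kk(U;D)$ with constant fibers $D_w=[m]\times\{+,-\}$, the label $\pm i$ attached to $w$ carries no information about the sets $\pm A^w_i$. The image need not even lie in your $\mathcal C$, since faces such as $\{(u,+i),(w,-i)\}$ with $\{u,w\}\in\kk$ are allowed. At a zero of $g\circ f$ you also cannot conclude anything about whether $x$ lies in $A^w_i$ or $-A^w_i$. The partition-of-unity map does not rescue this. It lands in the nerve of the cover, whose faces may contain two labels of the same $w$ or a pair $\{u,w\}\notin\kk$. And you cannot take membership from one construction and the facet condition from another.

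\textbf{The missing idea: fibers that depend on the cell.}
\begin{itemize}
\item Choose $\delta>0$ so that no set of diameter at most $\delta$ meets both $A^u_i$ and $-A^w_i$ when $\{u,w\}\in\kk$. This is possible by compactness and the second hypothesis.
\item Take a centrally symmetric triangulation $T$ of mesh less than $\delta$.
\item For $\tau\in T$, let $D_w(\tau)$ be the set of signed labels $\pm i$ such that some vertex of $\tau$ lies in $\pm A^w_i$. Set $U(\tau)=\{w: D_w(\tau)\neq\emptyset\}$ and $\Gamma(\tau)=\kk(U(\tau);D(\tau))$.
\end{itemize}
Applying the first hypothesis with $W'=W\setminus U(v)$ at a vertex $v$ gives $U(v)\in\LL$, hence $U(\tau)\in\LL$ by upward closure. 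Your sentence asserting this for the non-covering set $W_x$ has it reversed. Admissibility is required for arbitrary nonempty fibers precisely so that $\Gamma(\tau)$ is $(d-1)$-connected. Moreover $\Gamma(\sigma)\subseteq\Gamma(\tau)$ for $\sigma\subseteq\tau$ and $\Gamma(-\tau)=-\Gamma(\tau)$. These are exactly the monotonicity and symmetry needed to extend equivariantly over skeleta with $f(\tau)\subseteq|\Gamma(\tau)|$.

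The mesh bound makes every face of $\Gamma(\tau)$ satisfy $\pi_2(P_+)\cap\pi_2(P_-)=\emptyset$, so $\Gamma(\tau)\subseteq\mathcal C$. Every label in such a face is witnessed by a vertex of $\tau$. A limit over finer triangulations then produces $x$.

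\textbf{The facet condition needs a different mechanism.} You cannot push the image of $f$ into top-dimensional faces. Instead, at a zero $z\in\tau$, take a containment-maximal face $\eta$ of $\Gamma(\tau)$ containing $f(z)$ and read $P_\pm$ off $\eta$. Enlarging the support to $\eta$ has three effects:
\begin{itemize}
\item it keeps $0\in\conv g(\eta)$, since hulls only grow;
\item it keeps every label witnessed by a vertex of $\tau$, because $\eta$ is a face of $\Gamma(\tau)$;
\item it makes $\pi_1(\eta)$ a facet of $\kk[U(\tau)]$, since any $w\in U(\tau)$ could otherwise be added using its nonempty fiber. By the second admissibility condition, $\pi_1(\eta)$ is then a facet of $\kk$.
\end{itemize}
Enlarging to a facet-supported face merely inside $\mathcal C$ would add labels with no witness and destroy the membership conclusion.
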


The conclusion of intersecting pairs of convex sets brings Fan's theorem much closer to Radon's theorem \cite{Radon:1921vh}.  If we align with the notation from the results above, Radon's theorem guarantees that \textit{for any $d+1$ points in $\rr^{d-1}$, there exists a partition of them into two sets whose convex hulls intersect}.  One key generalization of Radon's theorem is Tverberg's theorem \cite{Tverberg1966}, which is a central result in discrete geometry \cites{Blagojevic2017, Barany2018, DeLoera2019}.  It states that \textit{for any $(r-1)d+1$ points in $\rr^{d-1}$ there exists a partition of them into $r$ sets whose convex hulls intersect}.

Just as Radon's theorem can be proved with the Borsuk--Ulam theorem \cite{Bajmoczy:1979bj}, the main topological tool behind many Tverberg-type results is a theorem by Volovikov about the existence of equivariant maps between two spaces \cite{Volovikov1996}.  Volovikov's theorem deals with maps equivariant with respect to the group $(\zz_p)^a$ for $p$ a prime and $a$ a positive integer, and generalizes the Borsuk--Ulam theorem.

We prove a selection-structure covering generalization of Volovikov's theorem in \cref{thm:selection-volovikov}.  The conclusion in that theorem is imposed by $r$-tuples of convex sets intersecting, with $r$ being a prime power.  When $r=2$, \cref{thm:selection-volovikov} specializes to \cref{thm:main-selection-frickwellner}.

There exist many extensions of Fan's theorem and Tucker's lemma beyond $\zz_2$-actions (see, e.g., \cites{Ziegler2002, Meunier2019a, Longueville2006}).  Our results show that the extensions to selection structures are also feasible in this context.

We prove \cref{thm:main-selection-frickwellner} in \cref{sec:selection-structure}.  We show several specializations of this theorem in \cref{sec:secializations}.  Then, we state and prove the Volovikov-style generalization in \cref{sec:ss-volovikov}.  Finally, we prove the applications to mass partitions and necklace splitting in \cref{sec:applications}.

\section{Selection-structure generalizations of Fan's theorem}\label{sec:selection-structure}

Let us begin by proving \cref{thm:main-selection-frickwellner}.

\begin{proof}
    Let $\delta>0$ be such that for all distinct $u,w \in W$ with $\{u,w\}\in \kk$ and $i \in [m]$, no subset of $S^d$ of diameter at most $\delta$ intersects both $A^u_i$ and $-A^w_i$.

    Let $T$ be a centrally symmetric triangulation of $S^d$ with mesh strictly smaller than $\delta$, and let $E = \{\pm 1, \dots, \pm m\}$, with a natural action of $\zz_2 =\{+1,-1\}$ with multiplication.

    For a vertex $v$ of $T$, let $D_w(v) =\{+i : v \in A^w_i\} \cup \{-i: v \in -A^w_i\}$.  Let $U(v) = \{w \in W: D_w(v) \neq \emptyset\}$.  The set $U(v)$ is the set of elements $w \in W$ such that $v$ is contained in $A^w_i \cup (-A^w_i)$ for some $i \in [m]$.

    If we had $U(v) \not\in \LL$, notice that by taking $W' = W\setminus U(v)$, we would have
    \[
    v \not\in \bigcup_{w \in W'} \bigcup_{i=1}^{m} \left( A_i^w \cup (-A_i^w)\right),
    \]
    contradicting the assumptions.  Therefore, $U(v) \in \LL$ for all vertices $v$.

    For a simplex $\tau$ of $T$, let $V(\tau)$ be its set of vertices, and define
    \[
    D_w(\tau) = \bigcup_{v \in V(\tau)} D_w(v) \qquad U(\tau) = \bigcup_{v \in V(\tau)}U(v).
    \]
    Since $\LL$ is upward-closed and $U(v) \in \LL$ for all $v$, we have $U(\tau) \in \LL$ too.  Finally, let $D(\tau) = \{D_w(\tau) : w \in U(\tau)\}$.  The set $D(\tau)$ is taken as a multiset, as we want one fiber for each $w \in W$, and we may have repeated fibers.  Consider the set
    \[
    \Gamma (\tau) = \kk (U(\tau), D(\tau)).
    \]
    This is the parallel expansion of $\kk[U(\tau)]$ by the (signed) fibers $D(\tau)$.

    The goal now is to construct a map
    \begin{align*}
        \psi : S^d=|T| & \to |\kk(W; E)| \qquad \mbox{such that for all } \tau \in T \\
        \psi (\tau) & \subseteq |\Gamma(\tau)|.
    \end{align*}

    This is done inductively on skeleta.  For each vertex $v$ of $T$, we define $\psi(v)$ as a vertex of $\Gamma(v)$ arbitrarily and equivariantly with respect to the $\zz_2$ action.  If the map $\psi$ has been defined on the $(r-1)$-skeleton and $\tau$ is an $r$-simplex, notice that $\psi (\partial \tau) \subset |\Gamma(\tau)|$.  Since $\Gamma(\tau)$ is $(d-1)$-connected, the map $\psi$ can be extended over $\tau$, since $r \le d$.  We extend the map on $-\tau$ equivariantly with respect to the $\zz_2$ action.  Note that $\sigma \subseteq \tau$ implies $\Gamma (\sigma) \subseteq \Gamma (\tau)$ and that $\Gamma(-\tau) = - \Gamma(\tau)$.

    Now we define a second map
    \begin{align*}
        \Phi: |\kk(W; E)| & \to \rr^{d} = \rr^{d-1}\times \rr 
    \end{align*}
    by defining it on the vertices $\Phi (w,+i) = (x^w_i,1)$ and $\Phi (w,-i) = (-x^w_i,-1)$, and extending linearly on the faces of $|\kk(W; E)|$.  The map \(F= \Phi \circ \psi : S^d  \to \rr^d\) is continuous and odd.  By the Borsuk--Ulam theorem, it must have a zero.

    Let $z$ be a zero of $F$.  Let $\tau$ be a face of $T$ that contains $z$, and $\eta$ be a containment-maximal face of $\Gamma(\tau)$ that contains $\psi (z)$.  Let $P_+ = \{(w,i) \in W \times [m]: (w,+i) \in \eta \}$ and $P_- = \{(w,i)\in W \times [m]: (w,-i) \in \eta\}$.
    It is a basic exercise from convexity that $0 \in \conv \Phi(\eta)$ if and only if
    \[
    \conv \{x^w_i: (w,i) \in P_+\} \cap \conv \{x^w_i : (w,i) \in P_-\} \neq \emptyset.
    \]

    First, we show that $\pi_1: P_+\cup P_- \to W$ is injective.  This is because the faces of $|\kk(W; E)|$ have at most one vertex from each fiber.  We also have $\pi_1(P_+) \cup \pi_1 (P_-) = \pi_1(\eta)$, which by definition is a face of $\kk$.  Moreover, it is a facet of $\kk[U(\tau)]$, which by the definition of selection structures is also a facet of $\kk$.

    Now let us show that $\pi_2(P_+) \cap \pi_2(P_-) = \emptyset$.  If we had $(u,+i) \in \eta$ and $(w, -i) \in \eta$, as stated above we must have $u \neq w$.  Since $(u,+i) \in \eta$ and $(w, -i) \in \eta$, we have $\{u,w\} \in \Gamma(\tau)$, which in turn implies $\{u,w\} \in \kk$.  Since $\eta \subseteq \Gamma(\tau)$, there are vertices $p,q \in V(\tau)$ with $p \in A^u_i$ and $q \in (-A^w_{i})$.  However, the diameter of $\tau$ is less than $\delta$, contradicting the choice of $\delta$.  Therefore no index $i$ appears with both signs.  


    
    What we have shown is that, for every $(w,i) \in P_+ \cup P_-$, some vertex of $\tau$ is in the corresponding set $A^{w}_i$ or $-A^{w}_i$ (depending on whether $(w,i) \in P_+$ or $(w,i) \in P_-$).  If we take a sequence of triangulations $T_{\alpha}$ with mesh tending to $0$, the finiteness of $W$ and the compactness of $S^d$ imply by a standard argument that the same holds for some point $x$ in the limit, which gives us the desired conclusion.

\end{proof}

\begin{corollary}[Diagonal version]\label{cor:diagonal}
    In \cref{thm:main-selection-frickwellner}, assume in addition that $x_i^w=x_i$ for all
    $w\in W$ and all $i\in[m]$ (i.e., all $m$-tuples $X^w$ are equal).  Then, we can replace the condition in the conclusion that $\pi_1(P_+\cup P_-)$ is a facet of $\kk$ by the following two conditions.  First, $\pi_1(P_+ \cup P_-)$ is a face of $\kk$.  Second, the projection onto the second coordinate $\pi_2: P_+ \cup P_- \to [m]$ is injective. 
\end{corollary}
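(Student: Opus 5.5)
Apply \cref{thm:main-selection-frickwellner} as is, then post-process its output $(x,P_+,P_-)$ by a reduction step that uses $x^w_i=x_i$. Theorem~\ref{thm:main-selection-frickwellner} gives a point $x$ and disjoint nonempty $P_+,P_-\subseteq W\times[m]$. These satisfy $\pi_1$ injective on $P_+\cup P_-$, $\pi_1(P_+\cup P_-)$ a facet of $\kk$, and $\pi_2(P_+)\cap\pi_2(P_-)=\emptyset$. They also satisfy
\[
\conv\{x_i : i\in\pi_2(P_+)\}\cap\conv\{x_i : i\in \pi_2(P_-)\}\neq\emptyset .
\]
The only defect is that $\pi_2$ may fail to be injective: two pairs $(u,i),(w,i)$ can lie in the same sign class. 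They cannot lie in opposite classes, since $\pi_2(P_+)\cap\pi_2(P_-)=\emptyset$.

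\textbf{Step 1: pruning duplicates.} For each $i\in\pi_2(P_+)$ choose one pair $(w_i,i)\in P_+$, and set $P'_+=\{(w_i,i): i\in \pi_2(P_+)\}$. Define $P'_-$ from $P_-$ in the same way. Then $P'_\pm\subseteq P_\pm$ are nonempty and disjoint, and $\pi_2$ is injective on $P'_+\cup P'_-$, using again that $\pi_2(P_+)\cap\pi_2(P_-)=\emptyset$. The convex-hull condition is unchanged, because $\{x^w_i:(w,i)\in P'_\pm\}=\{x_i : i\in\pi_2(P_\pm)\}=\{x^w_i:(w,i)\in P_\pm\}$ as sets of points. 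This is exactly where the diagonal hypothesis is used.

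\textbf{Step 2: verifying the conclusions.} Intersecting over fewer sets only enlarges the intersection, so
\[
x\in \bigcap_{(w,i)\in P'_+}A^w_i\cap\bigcap_{(w,i)\in P'_-}(-A^w_i).
\]
The projection $\pi_1$ stays injective on the subset $P'_+\cup P'_-$. Its image $\pi_1(P'_+\cup P'_-)$ is a subset of the facet $\pi_1(P_+\cup P_-)$, hence a face of $\kk$ since $\kk$ is a simplicial complex. The condition $\pi_2(P'_+)\cap\pi_2(P'_-)=\emptyset$ is inherited from $P_\pm$.

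\textbf{Main obstacle.} The only delicate point is Step 1: checking that choosing one representative per index preserves the Radon condition. With identical point sets this is an equality of point sets, not merely an inclusion, so nothing is lost. The cost of the reduction is that the facet condition can only be weakened to a face condition, which is exactly what the corollary asserts.
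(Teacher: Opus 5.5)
Your proof is correct and uses the same idea as the paper: since $x^u_i=x^w_i$, repeated same-sign labels can be discarded without affecting the Radon condition, and the disjointness $\pi_2(P_+)\cap\pi_2(P_-)=\emptyset$ rules out repeats across signs. The only difference is where the pruning happens. The paper removes duplicate vertices of $\eta$ inside the proof, letting the surviving vertex absorb the coefficient in the convex combination witnessing $0\in\conv\Phi(\eta)$. You instead post-process the theorem's output as a black box, which is slightly cleaner because the two point sets whose convex hulls must meet stay literally unchanged.
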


\begin{proof}
    In the construction of $\eta$, if we have $(u,+i)\in \eta$ and $(w,+i) \in \eta$ for some $u,w \in W$ and $i \in [m]$, we can remove one of them and have the other absorb the coefficient in the convex combination that showed $0 \in \Phi (\eta)$.  Since $\Phi (w,+i) = \Phi (u,+i)$, this causes no problems.  Therefore, the projection onto $[m]$ restricted to each set $P_+$ or $P_-$ is injective.  We have already shown that there are no repeated second coordinates in the two sets, which implies the desired conclusion.
\end{proof}

\section{Different specializations of \cref{thm:main-selection-frickwellner}}\label{sec:secializations}

Let us show how we recover results similar to the colorful result from Frick and Wellner,  \cite{Frick2025arxiv}*{Thm. 2.6}.   

\begin{proof}[Simplex specialization]
    Let $W = [d+1]$, $\kk=2^W$, and $\LL = \{W\}$.  This gives us a $d$-admissible selection structure since $\kk(W,D) = \bigast_{w \in W}D_w$, and the join of $d+1$ sets is $(d-1)$-connected.  The facet condition is trivial.  We have $d+1$ families of sets $\{A^w_i: i \in [m]\}$.  The first condition translates to $\bigcup_{i\in [m]}\left(A^w_i \cup (-A^w_i)\right)=S^d$ for all $w \in [d+1]$.  The second condition translates to $A^u_i \cap (-A^w_i) = \emptyset$ for all $u,w \in [d+1]$.  This are exactly the hypotheses for \cite{Frick2025arxiv}*{Thm. 2.6}.  When we apply \cref{thm:main-selection-frickwellner}, there is only one maximal face $B =[d+1]$ of $\kk$, which implies that $\pi_1$ must be a bijective function on $P_+ \cup P_-$.  For the projection $\pi_2$ onto the second coordinate, the two sets $P_+$ and $P_-$ satisfy $\pi_2(P_+) \cap \pi_2(P_-) = \emptyset$, so they do not share subindices $i\in [m]$.
\end{proof}

\begin{remark}
In the simplex specialization above, the projection
\( \pi_1:P_+\cup P_-\rightarrow[d+1] \)
is bijective, while
\(
\pi_2(P_+)\cap\pi_2(P_-)=\emptyset\).  The conclusion of \cite{Frick2025arxiv}*{Theorem 2.6}
additionally requires \(\pi_2:P_+\cup P_-\rightarrow[m]\)  to be injective. Our simplex specialization differs from
their stated conclusion by this additional label-injectivity
requirement.

In the diagonal setting of \cref{cor:diagonal}, repeated
same-sign labels can be merged, and the additional injectivity
can be imposed.
\end{remark}

It is interesting that the Frick--Wellner colorful result is related to the simplex specialization of \cref{thm:main-selection-frickwellner}.  For colorful results that have a selection-structure generalization, the standard colorful theorem is recovered when $\kk$ is the independence complex of a partition matroid of rank $d+1$, and $\LL$ is the family of sets of full rank.  This suggests a ``doubly-colorful'' Fan--Radon covering theorems.

Before stating the doubly-colorful version, let us state what the natural sparse version of the Fan--Radon theorem would be.

\begin{corollary}[Sparse Fan--Radon theorem]
    Let $d, m, n$ be positive integers such that $n \ge d+1$.  For each integer $w \in [n]$, let $X^w = \{x^w_1,\dots , x^w_m\} \subset \rr^{d-1}$ be a set of $m$ points.  For every $w \in [n]$ and $i \in [m]$, let $A^w_i$ be a closed subset of $S^d$.  Assume that the following two conditions hold.
    \begin{itemize}
        \item For all $W'\subseteq [n]$ such that $|W'| \ge n-d$, we have
        \[
        S^d = \bigcup_{w \in W'}\bigcup_{i=1}^m (A^w_i \cup (-A^w_i)).
        \]
        \item For every pair of different numbers $u,w \in [n]$ and every $i \in [m]$, we have $A^u_i \cap (-A^w_i) = \emptyset$.
    \end{itemize}

    Then, we can find two disjoint sets $P_+, P_-$ of $[n] \times [m]$ such that $|P_+| + |P_-| = d+1$, the projection $\pi_1: P_+ \cup P_- \to [n]$ is injective, the projection $\pi_2:P_+ \cup P_- \to [m]$ satisfies $\pi_2 (P_+) \cap \pi_2 (P_-) = \emptyset$, and
    \[
    \conv \{x^w_i : (w,i) \in P_+\} \cap \conv \{x^w_i : (w,i) \in P_-\}\neq \emptyset.
    \]
    Additionally, there exists $x \in S^d$ with
    \[
    x \in \left( \bigcap_{(w,i) \in P_+}A^w_i \right) \cap \left( \bigcap_{(w,i) \in P_-}(-A^w_i) \right).
    \]
\end{corollary}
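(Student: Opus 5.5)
We obtain this as the specialization of \cref{thm:main-selection-frickwellner} to $W=[n]$, with $\kk$ the $d$-skeleton of the full simplex on $[n]$ (all subsets of size at most $d+1$) and $\LL=\{U\subseteq [n] : |U|\ge d+1\}$. Under this choice the two hypotheses match. The condition $W\setminus W'\notin\LL$ means $|W'|\ge n-d$, so the first hypothesis of the theorem is exactly the covering assumption. Every pair $\{u,w\}$ with $u\ne w$ is a face of $\kk$, since $d+1\ge 2$, so the second hypothesis is exactly the disjointness assumption. The family $\LL$ is nonempty because $n\ge d+1$, and it is clearly upward closed.

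It remains to check that $(\kk,\LL)$ is $d$-admissible. For the facet condition: if $|U|\ge d+1$, then the facets of $\kk[U]$ are the $(d+1)$-subsets of $U$, and these are facets of $\kk$. For connectivity: $\kk(U;D)$ is the $d$-skeleton of the join $\bigast_{w\in U} D_w$. Each $D_w$ is a nonempty discrete set, hence $(-1)$-connected, and the join of $|U|\ge d+1$ such sets is $(|U|-2)$-connected. Since the $d$-skeleton of a complex has the same homotopy groups in dimensions at most $d-1$, the skeleton $\kk(U;D)$ is $(d-1)$-connected. This connectivity statement is the only step needing real care, and I would argue it carefully with cellular approximation.

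Applying \cref{thm:main-selection-frickwellner} now gives $x$, $P_+$ and $P_-$ with the stated convexity and membership properties. The projection $\pi_1$ is injective, and $\pi_1(P_+\cup P_-)$ is a facet of $\kk$, so it has size $d+1$. Injectivity then gives $|P_+|+|P_-|=d+1$. The theorem also gives $\pi_2(P_+)\cap\pi_2(P_-)=\emptyset$, and disjointness of $P_+$ and $P_-$ follows from injectivity of $\pi_1$. This yields every conclusion of the corollary.
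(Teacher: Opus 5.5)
Your proposal is correct and is exactly the paper's argument: apply \cref{thm:main-selection-frickwellner} with $W=[n]$, $\kk$ the $d$-skeleton of $2^{[n]}$ and $\LL$ the subsets of size at least $d+1$. Your check of $d$-admissibility, using the $d$-skeleton of the $(|U|-2)$-connected join, fills in what the paper leaves implicit. One small correction: disjointness of $P_+$ and $P_-$ does not follow from injectivity of $\pi_1$ on the union $P_+\cup P_-$; it is instead part of the theorem's conclusion, and it also follows from $\pi_2(P_+)\cap\pi_2(P_-)=\emptyset$. It is this disjointness that lets you pass from $|P_+\cup P_-|=d+1$ to $|P_+|+|P_-|=d+1$.
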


\begin{proof}
    Apply \cref{thm:main-selection-frickwellner} when $W = [n]$, $\kk$ is the $d$-skeleton of $2^{[n]}$, and $\LL$ is the family of subsets of $[n]$ with at least $d+1$ elements. 
\end{proof}

The doubly-colorful Fan--Radon theorem is the following.

\begin{corollary}[doubly-colorful Fan--Radon theorem]\label{cor:doubly-colorful}
    Let $d, m$ be positive integers, $W$ be a finite set partitioned into $d+1$ sets $W=W_1\sqcup \dots \sqcup W_{d+1}$.  For each $w \in W$, let $X^w = \{x^w_1,\dots, x^w_m\} \subset \rr^{d-1}$ be a set of points.  For every $w\in W$ and $i\in [m]$, let $A^w_i$ be a closed subset of $S^d$.  Assume that the following two conditions hold:
    \begin{itemize}
        \item For all $j \in [d+1]$, we have
        \[
        S^d = \bigcup_{w \in W_j} \bigcup_{i=1}^{m} \left( A_i^w \cup (-A_i^w)\right), 
        \]
        \item For every pair of elements $u, w \in W$ in different sets of the partition and every $i \in [m]$, we have $A_i^u \cap (-A_i^w) = \emptyset$.
    \end{itemize}
    Then there exist $x \in S^d$, two nonempty disjoint sets $P_+,P_- \subseteq W \times [m]$ with the following properties.  First,
    \(
    \conv \left(x^{w}_i : (w,i) \in P_+ \right)
    \cap \conv \left(x^{w}_i : (w,i) \in P_-\right) \neq \emptyset.
    \)
    Second, 
    \[
    x \in \left( \bigcap_{(w,i) \in P_+} A_i^{w}\right)\cap \left(\bigcap_{(w,i) \in P_-} \left(- A_i^{w}\right)\right).
    \]
    Third, the projection $\pi_1: P_+ \cup P_- \to W$ onto the first coordinate is injective and $\pi_1(P_+ \cup P_-)$ has exactly one element from each $W_j$.  The projection $\pi_2: W \times [m] \to [m]$ satisfies that $\pi_2(P_+) \cap \pi_2(P_-) = \emptyset$.
\end{corollary}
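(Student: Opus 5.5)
We derive this from \cref{thm:main-selection-frickwellner} with $\kk$ the independence complex of the partition matroid induced by $W=W_1\sqcup\dots\sqcup W_{d+1}$. That is, $\kk$ consists of all $\sigma\subseteq W$ with $|\sigma\cap W_j|\le 1$ for every $j$. We take $\LL$ to be the family of sets of full rank, namely $U\subseteq W$ with $U\cap W_j\neq\emptyset$ for all $j$. This $\LL$ is nonempty (it contains $W$, since each $W_j$ is nonempty; if some $W_j$ were empty the first hypothesis would fail) and is clearly upward-closed.

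\textbf{Admissibility.} We first check that $\Sigma=(\kk,\LL)$ is $d$-admissible. For $U\in\LL$ and any fibers $D$, the complex $\kk[U]$ is the join of the $0$-dimensional complexes $U\cap W_j$, $j\in[d+1]$. Hence
\[
\kk(U;D)=\bigast_{j=1}^{d+1}\Bigl(\bigsqcup_{w\in U\cap W_j} D_w\Bigr),
\]
a join of $d+1$ nonempty finite discrete sets. Such a join is $(d-1)$-connected, exactly as in the simplex specialization. For the facet condition, facets of $\kk[U]$ pick one element from each nonempty $U\cap W_j$. Since $U\in\LL$, every $U\cap W_j$ is nonempty, so these facets have one element from every $W_j$ and are therefore facets of $\kk$.

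\textbf{Hypotheses.} Next we translate the hypotheses. Suppose $W'\subseteq W$ satisfies $W\setminus W'\notin\LL$. Then some $W_j$ is disjoint from $W\setminus W'$, that is, $W_j\subseteq W'$. The first hypothesis of the corollary, applied to $W_j$, then gives the covering condition of \cref{thm:main-selection-frickwellner} for $W'$, because the union over $W'$ contains the union over $W_j$. For the second condition, if $u\neq w$ and $\{u,w\}\in\kk$, then $u,w$ lie in different parts of the partition. The corollary's second hypothesis therefore gives $A^u_i\cap(-A^w_i)=\emptyset$.

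\textbf{Conclusion.} Applying \cref{thm:main-selection-frickwellner} yields $x$, $P_+$ and $P_-$ with the convex-hull intersection property, the membership of $x$, injectivity of $\pi_1$ with $\pi_1(P_+\cup P_-)$ a facet of $\kk$, and $\pi_2(P_+)\cap\pi_2(P_-)=\emptyset$. The facets of $\kk$ are exactly the sets containing one element from each $W_j$, which is the required third property. No step is a real obstacle; the only point needing care is the connectivity computation for $\kk(U;D)$, which reduces to the standard fact that a join of $d+1$ nonempty spaces is $(d-1)$-connected.
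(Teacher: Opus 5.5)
Your proof is correct and follows the paper's route. You use the same partition-matroid selection structure ($\kk$ the sets with at most one element from each $W_j$, $\LL$ the sets meeting every $W_j$) and apply \cref{thm:main-selection-frickwellner}; the only difference is that you verify $d$-admissibility directly, by writing $\kk(U;D)$ as a join of $d+1$ nonempty discrete sets, and spell out how the hypotheses translate, whereas the paper simply cites \cite{soberon2026arxiv} and the partition-matroid case of \cref{def:selection-matroid}.
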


\begin{proof}
    Let $\kk$ be the simplicial complex of sets with at most one element from each $W_j$, and $\LL$ the family of sets with at least one element from each $W_j$.  This is a $d$-admissible selection structure \cite{soberon2026arxiv}.  One reason is that this corresponds to a partition matroid of rank $d+1$ as in \cref{def:selection-matroid}.  Apply \cref{thm:main-selection-frickwellner} to conclude.
\end{proof}

In the case when all the $m$-tuples $X^w$ are equal, the theorem is equivalent to the simplex specialization, as we can replace the family of sets $\{A^w_i : w \in W_j\}$ by $\bigcup_{w \in W_j} A^w_i$.  Then, an application of the simplex specialization of \cref{thm:main-selection-frickwellner} gives the same result.  Once we allow for different $m$-tuples, the double coloring gives new results.

To illustrate how \cref{thm:main-selection-frickwellner} applies to a broader set of examples, let us describe the matroid and the chessboard complex examples.

Given a matroid $M$ of rank at least $d+1$, the author has shown that the following is a $d$-admissible selection structure \cite{soberon2026arxiv}.  This is a direct consequence of the shellability of matroid independence complexes \cite{Bjoerner1992}.

\begin{definition}\label{def:selection-matroid}
    Let $d$ be a positive integer, $W$ be a finite set and $M$ be the independence complex of a matroid of rank greater than or equal to $d+1$ on $W$.  Let $\rho$ be the rank function of $M$.  Consider the families
    \begin{align*}
    \kk_{M,d} &= M^{(d)}, \mbox{ the $d$-skeleton of $M$, and} \\
    \LL_{{M,d}} &= \{U \subseteq W: \rho (U) \ge d+1\}.
    \end{align*}
    Then, $\Sigma = (\kk, \LL)$ is a $d$-admissible selection structure.
\end{definition}

\begin{corollary}[Matroid Fan--Radon theorem]
    Let $d, m$ be positive integers, $W$ be a finite set, and $M$ the independence complex of a matroid of rank greater than or equal to $d+1$ on $W$ with rank function $\rho$.  For each $w \in W$, let $X^w = \{x^w_1,\dots, x^w_m\} \subset \rr^{d-1}$ be a set of points.  For every $w\in W$ and $i\in [m]$, let $A^w_i$ be a closed subset of $S^d$.  Assume that the following two conditions hold:
    \begin{itemize}
        \item For all $W' \subseteq W$ such that $\rho (W \setminus W') \le d$, we have
        \[
        S^d = \bigcup_{w \in W'} \bigcup_{i=1}^{m} \left( A_i^w \cup (-A_i^w)\right), 
        \]
        \item For every pair of elements $u, w \in W$ such that $\{u,w\}\in M$ and every $i \in [m]$, we have $A_i^u \cap (-A_i^w) = \emptyset$.
    \end{itemize}
    Then there exist $x \in S^d$, two nonempty disjoint sets $P_+,P_- \subseteq W \times [m]$ with the following properties.  First,
    \(
    \conv \left(x^{w}_i : (w,i) \in P_+ \right)
    \cap \conv \left(x^{w}_i : (w,i) \in P_-\right) \neq \emptyset.
    \)
    Second, 
    \[
    x \in \left( \bigcap_{(w,i) \in P_+} A_i^{w}\right)\cap \left(\bigcap_{(w,i) \in P_-} \left(- A_i^{w}\right)\right).
    \]
    Third, the projection $\pi_1: P_+ \cup P_- \to W$ onto the first coordinate is injective and $\pi_1 (P_+ \cup P_-) \in M$ and has rank $d+1$.  The projection $\pi_2: W \times [m] \to [m]$ satisfies that $\pi_2(P_+) \cap \pi_2(P_-) = \emptyset$.
\end{corollary}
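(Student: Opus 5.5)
The Matroid Fan--Radon theorem should follow by applying \cref{thm:main-selection-frickwellner} to the selection structure $\Sigma=(\kk_{M,d},\LL_{M,d})$ of \cref{def:selection-matroid}. That definition already asserts that this pair is $d$-admissible, using shellability of matroid independence complexes. So the work consists of translating the two hypotheses and the conclusion between the matroid language and the selection-structure language.

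\textbf{First hypothesis.} In \cref{thm:main-selection-frickwellner}, the covering condition is required for all $W'$ with $W\setminus W'\notin\LL_{M,d}$. By definition of $\LL_{M,d}$, this means exactly $\rho(W\setminus W')\le d$, which is the first assumption of the corollary.

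\textbf{Second hypothesis.} The theorem requires $A^u_i\cap(-A^w_i)=\emptyset$ for distinct $u,w$ with $\{u,w\}\in\kk_{M,d}=M^{(d)}$. Since $d\ge 1$, the $d$-skeleton contains every independent set of size $2$. Hence $\{u,w\}\in M^{(d)}$ if and only if $\{u,w\}\in M$, and the second assumption of the corollary matches.

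\textbf{Conclusion.} The theorem gives $x$ and disjoint $P_+,P_-$ with the convex-hull intersection property and the membership of $x$. It also gives that $\pi_1$ is injective on $P_+\cup P_-$, that $\pi_2(P_+)\cap\pi_2(P_-)=\emptyset$, and that $B=\pi_1(P_+\cup P_-)$ is a facet of $M^{(d)}$. It remains to show that $B\in M$ has rank $d+1$, i.e.\ that every facet of $M^{(d)}$ has exactly $d+1$ elements.

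\begin{itemize}
\item An independent set $B$ with $|B|\le d$ extends, by the augmentation axiom, to a basis of $M$.
\item That basis has size at least $d+1$, because the rank of $M$ is at least $d+1$.
\item So $B$ can be enlarged by one element and still lie in $M^{(d)}$, hence is not a facet.
\end{itemize}

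Therefore facets of $M^{(d)}$ are exactly the independent sets of size $d+1$, so $B\in M$ and $\rho(B)=d+1$.

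The only real obstacle is the $d$-admissibility of $(\kk_{M,d},\LL_{M,d})$, and this is supplied by \cref{def:selection-matroid} together with \cite{soberon2026arxiv}. If one wanted to check it directly, two things would need verifying. The facet condition holds because, for $U$ with $\rho(U)\ge d+1$, facets of $M^{(d)}[U]$ are independent $(d+1)$-subsets of $U$, which are facets of $M^{(d)}$ by the argument above. The connectivity of the parallel expansion would be handled by noting that replacing vertices by fibers gives the $d$-skeleton of the independence complex of another matroid of rank $\ge d+1$ (the one obtained by adding parallel elements). That complex is shellable of dimension at least $d$, hence $(d-1)$-connected.
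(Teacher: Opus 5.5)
Your proposal is correct and follows the paper's proof exactly: apply \cref{thm:main-selection-frickwellner} to the $d$-admissible selection structure $(\kk_{M,d},\LL_{M,d})$ of \cref{def:selection-matroid}. Your translation of the two hypotheses, and your augmentation argument that the facets of $M^{(d)}$ are precisely the independent $(d+1)$-sets (hence of rank $d+1$), correctly fill in details the paper leaves implicit.
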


\begin{proof}
    Apply \cref{thm:main-selection-frickwellner} to the selection structure from \cref{def:selection-matroid}.
\end{proof}

When $M$ is a partition matroid, we recover \cref{cor:doubly-colorful}.  Given $W = X \times Y$, the chessboard complex on $W$ is the family of sets $W' \subseteq W$ that have at most one element in each row and at most one element in each column (i.e., the two coordinate projections restricted to $W'$ are injective functions).  Chessboard complexes appear naturally in combinatorial problems, and their topological properties are well studied \cite{Bjoerner1994}.

To define our selection structure for chessboard complexes, the most delicate point is the definition of $\LL$.  To do this, consider the following definition of an $(a,b,d+1)$-stair shape, introduced by Ziegler \cite{Ziegler1994}.

\begin{definition}\label{def:stair-shape}
    Let $a,b,d$ be integers such that $\nu(a,b) = \min\{a,b,\lfloor \frac{a+b+1}{3}\rfloor \} \ge d+1$, $a \le 2d+1$, and $b \le 2d+1$.  The $(a,b,d+1)$-stair is the set of squares $(i,j) \in [a]\times[b]$ such that
    \[
    a-(2d+1) \le j-i \le (2d+1)-b.
    \]
\end{definition}

\begin{figure}
    \centering
    \includegraphics[width=0.9\linewidth]{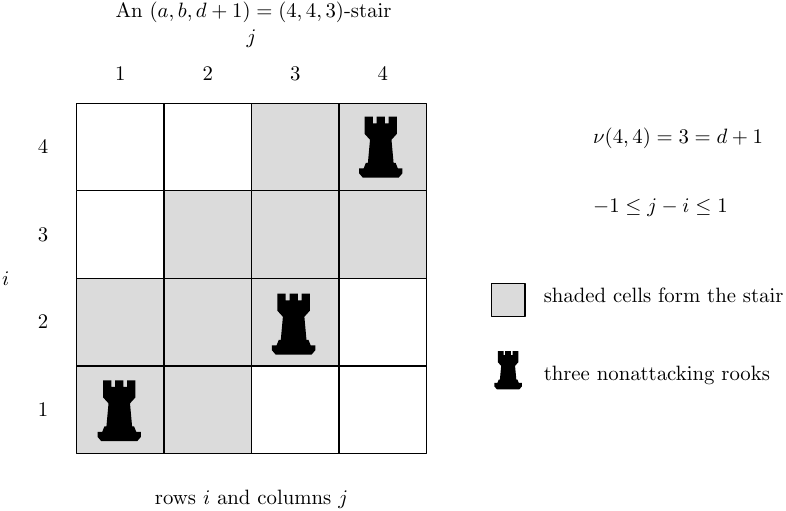}
    \caption{An example of a $(4,4,3)$-stair and three non-attacking rooks.  The placement of the rooks make a face of the chessboard complex.}
    \label{fig:stairs}
\end{figure}

See \cref{fig:stairs} for an illustration of a stair.  The existence of selection structures for chessboard complexes was also established in \cite{soberon2026arxiv}.

\begin{definition}\label{def:selection-chessboard}
    Let $d$ be a positive integer, let $X, Y$ be finite sets such that $\nu(|X|,|Y|) \ge d+1$, and let $W= X \times Y$.  Let $\kk_{\operatorname{st}, d}$ be the $d$-skeleton of the chessboard complex of $W$.  Let $\LL_{\operatorname{st}, d}$ be the family of sets $U$ such that there exist $a,b$ satisfying the conditions of \cref{def:stair-shape} and a set $U' \subseteq U$ isomorphic to the $(a,b,d+1)$-stair.  Then, $\Sigma=(\kk_{\operatorname{st}, d}, \LL_{\operatorname{st}, d})$ is a $d$-admissible selection structure.
\end{definition}

This gives us the following chessboard complex version of Fan's theorem.

\begin{corollary}[Chessboard complex Fan--Radon theorem]
    Let $d, m$ be positive integers, $X,Y$ be finite sets such that $\nu(|X|,|Y|) \ge d+1$, and $W = X \times Y$.  For each $w \in W$, let $X^w = \{x^w_1,\dots, x^w_m\} \subset \rr^{d-1}$ be a set of points.  For every $w\in W$ and $i\in [m]$, let $A^w_i$ be a closed subset of $S^d$.  Assume that the following two conditions hold:
    \begin{itemize}
        \item For all $W' \subseteq W$ such that $W'$ intersects every set isomorphic to some $(a,b,d+1)$-stair, where $a \le |X|, b\le |Y|$ are integers that satisfy the conditions of \cref{def:stair-shape}, we have
        \[
        S^d = \bigcup_{w \in W'} \bigcup_{i=1}^{m} \left( A_i^w \cup (-A_i^w)\right), 
        \]
        \item For every pair of elements $u=(x,y), w=(x',y') \in W$ such that $x \neq x'$ and $y \neq y'$, and every $i \in [m]$, we have $A_i^u \cap (-A_i^w) = \emptyset$.
    \end{itemize}
    Then there exist $x \in S^d$, two nonempty disjoint sets $P_+,P_- \subseteq W \times [m]$ with the following properties.  First,
    \(
    \conv \left(x^{w}_i : (w,i) \in P_+ \right)
    \cap \conv \left(x^{w}_i : (w,i) \in P_-\right) \neq \emptyset.
    \)
    Second, 
    \[
    x \in \left( \bigcap_{(w,i) \in P_+} A_i^{w}\right)\cap \left(\bigcap_{(w,i) \in P_-} \left(- A_i^{w}\right)\right).
    \]
    Third, the projection $\pi_1: P_+ \cup P_- \to W = X \times Y$ is injective and $\pi_1 (P_+ \cup P_-)$ corresponds to a placement of $d+1$ non-taking rooks.  The projection $\pi_2: W \times [m] \to [m]$ satisfies $\pi_2(P_+) \cap \pi_2(P_-) = \emptyset$.
\end{corollary}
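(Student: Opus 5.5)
The plan is to apply \cref{thm:main-selection-frickwellner} to the selection structure $\Sigma=(\kk_{\operatorname{st},d},\LL_{\operatorname{st},d})$ of \cref{def:selection-chessboard}. By that definition, $\Sigma$ is $d$-admissible as soon as $\nu(|X|,|Y|)\ge d+1$, which is our hypothesis. What remains is to translate the two hypotheses of the corollary into the two hypotheses of the theorem, and then to read off the conclusion.

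For the covering hypothesis, let $W'\subseteq W$ be a set with $W\setminus W'\notin\LL_{\operatorname{st},d}$. By the definition of $\LL_{\operatorname{st},d}$, the complement $W\setminus W'$ contains no set isomorphic to an admissible $(a,b,d+1)$-stair. Equivalently, every such stair copy in $W$ meets $W'$. Here an admissible stair is one with $a\le|X|$ and $b\le|Y|$ satisfying the conditions of \cref{def:stair-shape}, and ``isomorphic'' means obtained by choosing $a$ rows of $X$, $b$ columns of $Y$, and orderings of them. So $W'$ is exactly a set of the kind in the first hypothesis of the corollary, and therefore $S^d$ is covered by the sets $A^w_i\cup(-A^w_i)$ with $w\in W'$. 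This is the only step needing care: one has to check that ``isomorphic to a stair'' is interpreted identically in \cref{def:selection-chessboard} and in the corollary's hypothesis. Also, both upward-closedness of $\LL$ and the contrapositive reading go in the correct direction.

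For the disjointness hypothesis, note that a pair of distinct $u=(x,y)$ and $w=(x',y')$ forms an edge of $\kk_{\operatorname{st},d}$ exactly when $x\neq x'$ and $y\neq y'$. This holds provided $d\ge1$, so that the $d$-skeleton contains all edges of the chessboard complex. Hence the second hypothesis of the theorem is precisely the second hypothesis of the corollary.

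\cref{thm:main-selection-frickwellner} then provides $x$, $P_+$ and $P_-$ with the convex-hull intersection property, the membership of $x$ in the required sets, and the condition $\pi_2(P_+)\cap\pi_2(P_-)=\emptyset$. It also gives that $\pi_1$ is injective and that $\pi_1(P_+\cup P_-)$ is a facet of $\kk_{\operatorname{st},d}$. It remains to see that every facet of $\kk_{\operatorname{st},d}$ has exactly $d+1$ elements, i.e. is a placement of $d+1$ non-taking rooks. This follows because $\nu(|X|,|Y|)\ge d+1$ forces $\min\{|X|,|Y|\}\ge d+1$. Any partial rook placement with at most $d$ rooks therefore leaves a free row and a free column, so it can be extended by one more rook. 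Thus the maximal faces of the $d$-skeleton are exactly the placements of $d+1$ non-taking rooks, which concludes the proof.
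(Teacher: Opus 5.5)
Your proposal is correct and takes the same approach as the paper, whose proof is simply a direct application of \cref{thm:main-selection-frickwellner} to the selection structure of \cref{def:selection-chessboard}. You also write out the routine checks the paper leaves implicit, and each one is correct: the covering hypothesis is the complement reformulation of $W\setminus W'\notin\LL$, the edges of the $d$-skeleton are exactly the non-attacking pairs since $d\ge 1$, and its facets are exactly placements of $d+1$ non-taking rooks because $\min\{|X|,|Y|\}\ge d+1$.
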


\begin{proof}
    Use \cref{thm:main-selection-frickwellner} with the selection structure from \cref{def:selection-chessboard}.
\end{proof}

\section{Tverberg-type Volovikov theorem}\label{sec:ss-volovikov}

Let $G$ be a group.  For some positive integer $n$, we denote by $G^{*n}$ the $n$-fold join of $G$.  This is an $(n-2)$-connected space with an action of $G$, where $h \cdot (\sum_{g \in G} \alpha_g g) = \sum_{g \in G} \alpha_g (hg)$.  We denote by $W_G$ the $(|G|-1)$-dimensional real space
\[
W_G = \{x \in \rr^{G}: \sum_{g \in G} x_g = 0\},
\]
which also has a natural action of $G$.

\begin{theorem}[Volovikov 1996 \cite{Volovikov1996}]
    Let $d,a$ be positive integers, let $p$ be a prime, and let $r=p^a$, $n=(r-1)d$, and $G=(\zz_p)^a$.  Every continuous $G$-equivariant map $f: G^{*(n+1)} \to W_G^{\oplus d} $ has a zero.
\end{theorem}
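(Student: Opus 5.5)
This is Volovikov's theorem, which the paper cites rather than proves; we plan the standard argument via Fadell--Husseini index / cohomological connectivity for the elementary abelian group $G=(\zz_p)^a$.

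\textbf{Step 1: reduce to a map into a sphere.} Suppose, for contradiction, that $f: G^{*(n+1)} \to W_G^{\oplus d}$ is $G$-equivariant and has no zero. Normalizing gives an equivariant map
\[
g = f/\|f\| : G^{*(n+1)} \to S(W_G^{\oplus d}).
\]
The target is a sphere of dimension $(r-1)d-1 = n-1$. The action of $G$ on $W_G^{\oplus d}$ fixes only the origin, since the only $G$-invariant vectors in $\rr^G$ are constants, and these are excluded by the sum-zero condition. Hence $G$ acts on the sphere without fixed points.

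\textbf{Step 2: free domain.} $G$ acts freely on $G^{*(n+1)}$. This space is $(n-1)$-connected and has dimension $n$.

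\textbf{Step 3: compare the ideal-valued indices.} Use the Fadell--Husseini index
\[
\operatorname{Ind}_G(X)=\ker\bigl(H^*(BG;\mathbb F_p)\to H^*_G(X;\mathbb F_p)\bigr).
\]
It is monotone under equivariant maps: a map $X\to Y$ gives $\operatorname{Ind}_G(Y)\subseteq \operatorname{Ind}_G(X)$.

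For the domain, the action is free and the space is $(n-1)$-connected. The Serre spectral sequence of the Borel construction then shows $H^j_G(X)\cong H^j(BG)$ for $j\le n-1$. So $\operatorname{Ind}_G(X)$ contains no nonzero element of degree at most $n$, except possibly in degree exactly $n$.

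More simply, $H^*_G(X)=H^*(X/G)$ vanishes above degree $n$. Since $H^*(BG)$ is nonzero in every degree, this gives $\operatorname{Ind}_G(X)\supseteq H^{\ge n+1}(BG)$, and the restriction to low degrees is injective.

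\textbf{Step 4: the sphere (main obstacle).} For the sphere $S=S(V)$ with $V=W_G^{\oplus d}$ fixed-point free, we use the Gysin sequence or the Borel localization theorem. It shows that $\operatorname{Ind}_G(S)$ is generated by the mod-$p$ Euler class $e(V)\in H^{n}(BG)$, which is nonzero because $V^G=0$. Concretely, $e(V)$ is a product of nonzero linear forms for $p$ odd, and a product of nonzero classes in $H^1$ for $p=2$. Since $H^*(BG;\mathbb F_p)$ modulo nilpotents is a polynomial ring, and this product involves no zero factors, $e(V)\neq 0$.

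By monotonicity, $e(V)\in \operatorname{Ind}_G(X)$. This is the step we expect to be hardest, and it needs the most care for $p$ odd, where one must pass to the reduced polynomial part.

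\textbf{Step 5: conclusion.} By Step 3, $\operatorname{Ind}_G(X)$ has no nonzero element in degree $n$. Indeed, $H^n(BG)\to H^n_G(X)=H^n(X/G)$ is injective: in the Serre spectral sequence, the only possible differentials killing degree-$n$ base classes come from the fibre cohomology $H^n(X)$. These transgress into degree $n+1$, not degree $n$.

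So the nonzero class $e(V)$ of degree $n$ would lie in a set containing no nonzero degree-$n$ elements. This contradiction shows that $f$ must have a zero.
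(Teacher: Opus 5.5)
The paper gives no proof of this theorem. It is quoted from Volovikov and used as a black box, the only remark being that $r=2$ is Borsuk--Ulam, so there is no internal argument to compare with.

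Your proposal is a correct, standard proof of the statement as formulated here, via the Fadell--Husseini index. The free, $(n-1)$-connected, $n$-dimensional domain $X=G^{*(n+1)}$ has $\operatorname{Ind}_G(X)=H^{\ge n+1}(BG;\mathbb{F}_p)$. The nonzero degree-$n$ class $e(V)$, with $V=W_G^{\oplus d}$, lies in $\operatorname{Ind}_G(S(V))$, and monotonicity gives the contradiction.

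Your route differs from Volovikov's original in generality. His theorem excludes equivariant maps into any fixed-point-free, finite-dimensional mod-$p$ cohomology sphere of the relevant dimension, from domains that need only be mod-$p$ acyclic in the corresponding range. That requires a genuine spectral-sequence/localization argument. You instead use that the target is a linear representation sphere, so its index is read off from an Euler class. This special case is all the paper needs.

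A few points to tighten:
\begin{itemize}
\item You only need $e(V)\in\operatorname{Ind}_G(S(V))$. This follows because the pullback of $V$ to $EG\times_G S(V)$ has the tautological nowhere-zero section. The claim that $e(V)$ generates the index is unnecessary.
\item For $p$ odd, working with an $\mathbb{F}_p$-Euler class (or Gysin sequence) is legitimate because a group of odd order acts on $V$ preserving orientation. You should say so.
\item The sentence in Step~3 about degree $n$ is garbled. What is true, and what you actually prove in Step~5, is that $H^j(BG)\to H^j_G(X)$ is bijective for $j\le n-1$ and injective for $j=n$. The reason is that the only nonzero rows of the Serre spectral sequence are $t=0$ and $t=n$.
\item Step~4 is not really the hard part. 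For $p=2$, $W_G$ is the sum of all nontrivial real characters. For $p$ odd, it is the sum of the realifications $\mathbb{R}(\chi)$ over pairs $\{\chi,\bar\chi\}$ of nontrivial characters. So $e(W_G)$ is, up to a unit, a product of nonzero linear forms in $H^1$, respectively of Bocksteins of such forms in $H^2$. Hence $e(V)=e(W_G)^d$ is nonzero in the domain $\mathbb{F}_2[t_1,\dots,t_a]$, respectively $\mathbb{F}_p[y_1,\dots,y_a]$.
\end{itemize}
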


Note that the case $r=2$ above is exactly the Borsuk--Ulam theorem.

\begin{theorem}[Selection-structure Volovikov theorem]\label{thm:selection-volovikov}
    Let $d,m, a,q$ be positive integers, $p$ be a prime number, and $r=p^a$.  Assume $2 \le q \le r$.  Let $G= (\zz_p)^a$; note that $|G|=r$.  Let $n=(r-1)d$.  Let $W$ be a finite set and $\Sigma = (\kk, \LL)$ be an $n$-admissible selection structure on $W$.  For each $w \in W$, let $X^w = \{x^w_1,\dots, x^w_m\}$ be a set of $m$ points in $\rr^{d-1}$.  Finally, for each $w \in W$ and $i \in [m]$, let $A^w_i$ be a closed subset of $G^{*(n+1)}$.  Suppose that the following conditions hold.
    \begin{itemize}
        \item For all $W' \subseteq W$ such that $W \setminus W' \not\in \LL$, we have
        \[
        G^{*(n+1)} = \bigcup_{w \in W'} \bigcup_{i =1}^m \bigcup_{g \in G} gA_i^w.
        \]
        \item For every $q$-tuple of different elements $w_1, \dots, w_q$ of $W$ such that $\{w_1,\dots,w_q\} \in \kk$, every $q$-tuple $g_1,\dots,g_q$ of different elements of $G$, and every $i \in [m]$, we have $\bigcap_{j \in [q]} g_jA^{w_j}_i  = \emptyset$.
    \end{itemize}
    Then, there exists $x \in G^{*(n+1)}$ and subsets $P_g \subset W \times [m]$ for each $g \in G$ such that the following holds.  First, the projection onto the first coordinate $\pi_1: \bigcup P_g \to W$ is injective and $\pi_1 (\bigcup P_g)$ is a facet of $\kk$.  The projection $\pi_2: \bigcup P_g \to [m]$ satisfies that for any $q$ different sets in $\{P_g: g \in G\}$, their projections $\pi_2(P_g)$ have empty intersection.  Additionally, we have
\[
\bigcap_{g \in G}\conv\{x^w_i : (w,i) \in P_g\} \neq \emptyset \qquad \mbox{and}\qquad x \in \bigcap_{g \in G}\bigcap_{(w,i) \in P_g} g A^w_i.
\]
\end{theorem}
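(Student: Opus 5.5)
We follow the proof of \cref{thm:main-selection-frickwellner}, with $\zz_2$ replaced by $G$ and Borsuk--Ulam replaced by Volovikov's theorem. Fix a $G$-invariant triangulation $T$ of $G^{*(n+1)}$ whose mesh is smaller than a constant $\delta>0$. Choose $\delta$ so that no set of diameter at most $\delta$ meets all of $g_1A^{w_1}_i,\dots,g_qA^{w_q}_i$ for any admissible $q$-tuples (distinct $w_j$ with $\{w_1,\dots,w_q\}\in\kk$, distinct $g_j$, and $i\in[m]$). Such a $\delta$ exists by compactness, since these are finitely many families of closed sets with empty intersection. The triangulation can be obtained by iterated barycentric subdivision of the natural join structure, which is $G$-invariant.

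Let $E=G\times[m]$ with $G$ acting by left multiplication on the first factor. For a vertex $v$ of $T$ set $D_w(v)=\{(g,i): v\in gA^w_i\}$ and $U(v)=\{w: D_w(v)\neq\emptyset\}$. Exactly as before, the covering hypothesis gives $U(v)\in\LL$. For a simplex $\tau$ define $D_w(\tau)$, $U(\tau)$ and $\Gamma(\tau)=\kk(U(\tau);D(\tau))$ as in the proof of \cref{thm:main-selection-frickwellner}. These satisfy $\Gamma(g\tau)=g\Gamma(\tau)$, because $v\in hA^w_i$ iff $gv\in ghA^w_i$, and $\Gamma(\sigma)\subseteq\Gamma(\tau)$ whenever $\sigma\subseteq\tau$.

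Next we build a $G$-map $\psi:G^{*(n+1)}\to|\kk(W;E)|$ with $\psi(\tau)\subseteq|\Gamma(\tau)|$, by induction on skeleta one $G$-orbit of simplices at a time. Since $G^{*(n+1)}$ has dimension $n$ and each $\Gamma(\tau)$ is $(n-1)$-connected by $n$-admissibility, every extension over an $r$-simplex with $r\le n$ exists. The main obstacle is equivariance: the action of $G$ on $G^{*(n+1)}$ is free, but a simplex of $T$ could be mapped to itself by a nontrivial element. To avoid this, take $T$ to be a subdivision that is a free $G$-complex. The barycentric subdivision of a free $G$-CW structure works, since a group element fixing a simplex setwise must fix its barycenter. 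We then extend on one representative of each orbit and transport the extension by $G$.

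Define $\Phi:|\kk(W;E)|\to W_G^{\oplus d}\cong(\rr^{d-1}\times\rr)\otimes W_G$ on vertices by
\[
\Phi(w,(g,i)) = (x^w_i,1)\otimes\left(e_g-\tfrac1r\mathbf 1\right),
\]
and extend linearly on faces. This map is $G$-equivariant, so by Volovikov's theorem $F=\Phi\circ\psi$ has a zero $z$. Let $\tau\ni z$ and let $\eta$ be the face of $\Gamma(\tau)$ carrying $\psi(z)$, maximal as before. Set $P_g=\{(w,i): (w,(g,i))\in\eta\}$. The condition $\Phi=0$ at a convex combination means that the vectors $\sum_{(w,i)\in P_g}\lambda_{w,i}(x^w_i,1)$ are equal for all $g$. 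The last coordinate forces all $P_g$ to be nonempty with equal total weights, so after normalizing we obtain a common point of $\bigcap_g\conv\{x^w_i:(w,i)\in P_g\}$. This is the standard Tverberg-type linear-algebra step.

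Injectivity of $\pi_1$ holds because a face uses at most one vertex from each fiber. The facet property follows from maximality of $\eta$ together with the second admissibility condition. For the label condition, suppose $i\in\pi_2(P_{g_1})\cap\dots\cap\pi_2(P_{g_q})$ with the $g_j$ distinct. Then there are distinct $w_j$, forming a face of $\kk$ and hence satisfying $\{w_1,\dots,w_q\}\in\kk$, and vertices of $\tau$ lying in $g_jA^{w_j}_i$. This contradicts the choice of $\delta$. Finally, we let the mesh tend to $0$ and pass to a limit using compactness and the finiteness of the possible $(P_g)_g$. This yields $x\in\bigcap_g\bigcap_{(w,i)\in P_g}gA^w_i$, exactly as at the end of the proof of \cref{thm:main-selection-frickwellner}.
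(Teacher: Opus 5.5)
Your proposal is correct and follows the paper's proof essentially step for step: the same $\delta$, the same fibers $D_w(v)$ and complexes $\Gamma(\tau)$, the equivariant skeleton-by-skeleton construction of $\psi$, and the tensor map $\Phi(w,g,i)=\lambda_g\otimes(x^w_i,1)$ combined with Volovikov's theorem. The facet, label and limiting arguments also match the paper. Your additions (existence of $\delta$ by compactness, and well-definedness of the orbit-by-orbit equivariant extension) fill in details the paper leaves implicit. Your barycenter argument in fact shows that no nontrivial element of $G$ can stabilize any simplex of a $G$-invariant triangulation, because the action is free and simplicial, so the special choice of subdivision is not actually needed.
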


The condition $\bigcap_{g \in G}\conv\{x^w_i : (w,i) \in P_g\} \neq \emptyset$ implies that $P_g \neq \emptyset$ for all $g \in G$, which will be relevant in our applications.

    \begin{proof}
    Let $\delta>0$ be such that for all distinct $w_1,\dots, w_q \in W$ with $\{w_1,\dots,w_q\}\in \kk$, all distinct $q$-tuples $g_1,\dots,g_q$ of elements of $G$, and every $i \in [m]$, no set of $G^{*(n+1)}$ of diameter at most $\delta$ intersects all the sets $g_jA^{w_j}_i$ for $j \in [q]$.

    Let $T$ be a $G$-invariant triangulation of $G^{*(n+1)}$ with mesh strictly smaller than $\delta$, and let $E = G \times [m]$, with a natural action of $G$.

    For a vertex $v$ of $T$, let $D_w(v) =\{(g,i) : v \in gA^w_i \}$.  Let $U(v) = \{w \in W: D_w(v) \neq \emptyset\}$.  The set $U(v)$ is the set of elements $w \in W$ such that $v$ is contained in $\bigcup_{g\in G}g A^w_i$ for some $i \in [m]$.

    If we had $U(v) \not\in \LL$, notice that by taking $W' = W\setminus U(v)$, we would have
    \[
    v \not\in \bigcup_{w \in W'} \bigcup_{i=1}^{m} \bigcup_{g \in G} g A_i^w ,
    \]
    contradicting the assumptions.  Therefore, $U(v) \in \LL$ for all vertices $v$.

    For a simplex $\tau$ of $T$, let $V(\tau)$ be its set of vertices, and define
    \[
    D_w(\tau) = \bigcup_{v \in V(\tau)} D_w(v) \qquad U(\tau) = \bigcup_{v \in V(\tau)}U(v).
    \]
    Since $\LL$ is upward-closed and $U(v) \in \LL$ for all $v$, we have $U(\tau) \in \LL$ too.  Finally, let $D(\tau) = \{D_w(\tau) : w \in U(\tau)\}$.  Consider the set
    \[
    \Gamma (\tau) = \kk (U(\tau), D(\tau)).
    \]
    This is the parallel expansion of $\kk[U(\tau)]$ by the fibers $D(\tau)$.  Note that for $\sigma \subseteq \tau$, we have $\Gamma (\sigma) \subseteq \Gamma (\tau)$, and $\Gamma( h \tau) = h \Gamma (\tau)$ for all $h \in G$.

    The goal now is to construct a map
    \begin{align*}
        \psi : G^{*(n+1)}=|T| & \to |\kk(W; E)| \qquad \mbox{such that for all } \tau \in T \\
        \psi (\tau) & \subseteq |\Gamma(\tau)|
    \end{align*}

    This is done inductively on skeleta.  For each vertex $v$ of $T$, we define $\psi(v)$ as a vertex of $\Gamma(v)$ arbitrarily and equivariantly with respect to the $G$-action.  If the map $\psi$ has been defined on the $(k-1)$-skeleton and $\tau$ is a $k$-simplex, notice that $\psi (\partial \tau) \subset |\Gamma(\tau)|$.  Since $\Gamma(\tau)$ is $(n-1)$-connected, the map $\psi$ can be extended over $\tau$, as long as $k \le n$.  We extend the map on $g\tau$ for all $g \in G$ equivariantly with respect to the $G$-action.

    Now we define a second map
    \begin{align*}
        \Phi: |\kk(W; E)| & \to W_G \otimes \rr^{d} = W_G \otimes (\rr^{d-1}\times \rr )
    \end{align*}
    by defining it on the vertices.  To do this, let $\{\lambda_g: g\in G\}$ be the set of vertices of a regular simplex in $W_G$ centered at the origin.  For example, if $\{e_g \in \rr^G\}$ is the $\{0,1\}$-vector with coordinate $1$ only at entry $g$, we can take $\lambda_g = e_g - \frac{1}{r}\sum_{h \in G}e_h$.  Recall that the vertices of $\kk(W;E)$ are equal to the set $W \times G \times [m]$, due to the choice of fibers $E$.  Now we define
    \[
    \Phi (w,g,i) = \lambda_g \otimes (x^w_i,1)
    \]
    and extend linearly on the faces of $|\kk(W; E)|$.  The map \(F= \Phi \circ \psi : G^{*(n+1)}  \to (W_G)^{\oplus d}\) is continuous and $G$-equivariant.  By Volovikov's theorem, it must have a zero.

    Let $z$ be a zero of $F$.  Let $\tau$ be a face of $T$ that contains $z$, and $\eta$ be a containment-maximal face of $\Gamma(\tau)$ that contains $\psi (z)$.  Let $P_g = \{(w,i) \in W \times [m]: (w,g,i) \in \eta \}$.

    The core of the tensoring technique from B\'ar\'any and Onn \cite{Barany1997} (based on Sarkaria's proof of Tverberg's theorem \cite{Sarkaria1992}) is that
    $0 \in \conv \Phi(\eta)$ if and only if
    \[
    \bigcap_{g \in G}\conv \{x^w_i: (w,i) \in P_g\} \neq \emptyset.
    \]
    See \cite{Sarkar2022} for a standard deduction.  The key reason is that the only nontrivial linear dependence, up to scalar multiplication of the set $\{\lambda_g: g \in G\}$ is $\sum_{g \in G}\lambda_g = 0$.  In other words, if a linear combination of the vectors $\lambda_g$ is zero, their coefficients must be equal.  This carries through the tensor product.  If $0 \in \conv \Phi(\eta)$, we can factor the elements tensored with $\lambda_g$ for each $g$, which, after normalizing the coefficients, leads us to a point in $\conv \{x^w_i: (w,i) \in P_g\}$.  Those points must be equal due to the properties of $\lambda_g$ described above.

    Now let us prove the properties of $\pi_1: (\bigcup_{g\in G} P_g)\to W$.  Since the faces of $|\kk(W; E)|$ have at most one vertex from each fiber, $\pi_1$ is injective.  We also have $\pi_1(\bigcup_{g\in G}P_g)= \pi_1(\eta)$, which by definition is a facet of $\kk[U(\tau)]$.  By the second condition for selection structures, this is a facet of $\kk$.

    Now let us show that for any $q$ pairwise distinct elements $g_1,\dots,g_q$ in $G$, we have $\bigcap_{j\in [q]}\pi_2(P_{g_j}) = \emptyset$.  If we had $(u,g,i) \in \eta$ and $(w, h,i) \in \eta$, we would first have $u \neq w$.  Therefore, if we have $i \in \bigcap_{j \in [q]} \pi_2(P_{g_j})$, we would need $q$ different elements $w_1,\dots, w_q$ of $W$ such that $(w_j,g_j,i) \in \eta$ for all $j \in [q]$.  This implies $\{w_1,\dots,w_q\} \in \kk$.  Since $\eta \subseteq \Gamma(\tau)$, that means there are vertices $v_1,\dots,v_q \in V(\tau)$ with $v_j \in g_j A^{w_j}_i$ for all $j \in [q]$.  However, the diameter of $\tau$ is less than $\delta$, contradicting the choice of $\delta$.  Therefore no index $i$ appears in all sets $\pi_2(P_{g_j})$ for all $j \in [q]$.


    
    What we have shown is that, for every $(w,i) \in \bigcup_{g\in G} P_g $, some vertex of $\tau$ is in the corresponding set $gA^{w}_i$ (depending on which $P_g$ contains $(w,i)$).  If we take a sequence of triangulations $T_{\alpha}$ with mesh tending to $0$, the finiteness of $W$ and the compactness of $G^{*(n+1)}$ imply by a standard argument that the same holds for some point $x$ in the limit, which gives us the desired conclusion
    \[
    x \in \bigcap_{g \in G}\bigcap_{(w,i)\in P_g}gA^w_i.
    \]

\end{proof}

Just as special configurations of points can have very restricted Radon partitions, the same can be said for Tverberg partitions.  Given a set $G$ with $r$ elements and $n=d(r-1)$, we say that a sequence $g_1,\dots, g_{n+1}$ of elements of $G$ is rainbow if the set $\{g_j : (i-1)(r-1)+1 \le j \le i(r-1)+1\}$ is equal to $G$ for all $i=1,\dots, d$.  When $|G|=2$, rainbow sequences are simply alternating sequences.  Bukh, Loh, and Nivasch showed that for every $m$, we can find ordered $m$-point sequences $x_1,\dots,x_m$ in $\rr^{d-1}$ such that the only minimal Tverberg partitions they form correspond to rainbow subsequences \cite{Bukh2017}.  This allows us to prove the following version of Fan's theorem.  See also the work of P\'or on unavoidable Tverberg sequences \cite{por2018arxiv} and the work of Frick and Jeffs on $d$-Tverberg complexes \cite{Frick2024}.  Notice that the case $r=2$ below is exactly \cref{thm:fan-1952}.

\begin{corollary}[Fan-type covering version of Volovikov's theorem]
    Let $d,m,a$ be positive integers, $p$ be a prime, let $r=p^a$, and let $n=(r-1)d$.  Let $G = (\zz_p)^a$.  Let $A_1,\dots, A_m$ be closed subsets of $G^{*(n+1)}$.  Suppose that $G^{*(n+1)} = \bigcup_{i=1}^m \bigcup_{g \in G} gA_i$, and that for any distinct $g,h \in G$ and every $i \in [m]$ we have $gA_i \cap hA_i = \emptyset$.  Then, we can find indices $i_1<\dots< i_{n+1}$ of $[m]$ and a rainbow sequence $g_1,\dots,g_{n+1}$ of elements of $G$ such that
    \[
    \bigcap_{j=1}^{n+1} g_{j}A_{i_j} \neq \emptyset.
    \]
\end{corollary}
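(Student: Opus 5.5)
The plan is to apply \cref{thm:selection-volovikov} in its most degenerate form, with $q=2$ and $|W|=n+1$, and then use the Bukh--Loh--Nivasch configurations to control the Tverberg partition.

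\textbf{Choice of selection structure.} Let $W=[n+1]$, $\kk=2^W$ and $\LL=\{W\}$. As in the simplex specialization, $\kk(W;D)=\bigast_{w\in W}D_w$ is a join of $n+1$ nonempty finite sets, hence $(n-1)$-connected. The facet condition is trivial, since the only $U\in\LL$ is $W$ itself. So $\Sigma$ is $n$-admissible.

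\textbf{Sets and points.} For every $w\in W$ put $A^w_i=A_i$, and put $x^w_i=x_i$, where $x_1,\dots,x_m\in\rr^{d-1}$ is a Bukh--Loh--Nivasch sequence: every minimal Tverberg partition into $r$ parts corresponds to a rainbow subsequence.

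\textbf{Checking the hypotheses.} The condition $W\setminus W'\notin\LL$ means $W'\neq\emptyset$. The covering hypothesis for any such $W'$ is then exactly $G^{*(n+1)}=\bigcup_i\bigcup_g gA_i$. Take $q=2$. The second hypothesis asks that $g_1A_i\cap g_2A_i=\emptyset$ for distinct $g_1,g_2$, which is what we are given.

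\textbf{What the theorem gives.} We obtain $x$ and sets $P_g\subseteq W\times[m]$ with the following properties:
\begin{itemize}
\item $\pi_1$ is a bijection from $\bigcup_g P_g$ onto $W$, so there are exactly $n+1$ pairs in total.
\item The sets $\pi_2(P_g)$ are pairwise disjoint, because $q=2$.
\item $\bigcap_g\conv\{x_i:(w,i)\in P_g\}\neq\emptyset$.
\item $x\in\bigcap_g\bigcap_{(w,i)\in P_g}gA_i$.
\end{itemize}

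\textbf{Removing repeated labels.} As in \cref{cor:diagonal}, if two pairs in the same $P_g$ share a label $i$, the corresponding points coincide, so one can be discarded. After this, the labels in $I=\bigcup_g\pi_2(P_g)$ are all distinct, and each label $i\in I$ carries a single group element $g(i)$. We have $|I|\le n+1$, the points $\{x_i:i\in I\}$ admit a Tverberg partition into $r$ parts indexed by $G$, and $x\in\bigcap_{i\in I}g(i)A_i$.

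\textbf{Passing to a minimal partition.} Shrink to a minimal Tverberg subpartition, keeping all $r$ parts nonempty. By Bukh--Loh--Nivasch, the indices involved in it, listed in increasing order, carry group labels forming a rainbow sequence. The intersection condition on $x$ survives any shrinking, since we only drop sets from an intersection.

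\textbf{Main obstacle: the length.} The conclusion requires exactly $n+1$ indices. In general position in $\rr^{d-1}$, Tverberg partitions into $r$ parts need at least $(r-1)(d-1)+1=n+2-r$ points, which is less than $n+1$. So the length is not automatic, and one must check what Bukh--Loh--Nivasch actually give. Two routes seem possible:
\begin{itemize}
\item Use their ``stretched'' configurations in a suitable dimension, so that the minimal Tverberg partitions have exactly $n+1$ points with the rainbow pattern defined above. This might require working in $\rr^{d}$ with the extra coordinate $1$ coming from the map $\Phi$, i.e.\ reading the intersection condition on $\Phi$ as a Tverberg condition in one dimension higher.
\item Alternatively, read the rainbow definition as allowing the block structure to come from a shorter minimal partition, and pad it up to length $n+1$. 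Padding uses the covering hypothesis: the remaining points of $W$ are covered by some $gA_i$, and the disjointness hypothesis keeps labels distinct across sign classes.
\end{itemize}
I would first try the first route, matching the dimension count so that minimality gives exactly $n+1$ points. The case $r=2$, which recovers \cref{thm:fan-1952} via the moment curve, is a sanity check on that count.
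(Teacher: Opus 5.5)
Your setup is the paper's: $W=[n+1]$, $\kk=2^W$, $\LL=\{W\}$, $A^w_i=A_i$, a Bukh--Loh--Nivasch sequence $X^w=X$ in $\rr^{d-1}$, \cref{thm:selection-volovikov} with $q=2$, and merging repeated labels inside each $P_g$. The gap is at the end. You leave the proof unfinished because of a ``length obstacle,'' and that obstacle comes from a miscount.

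For points in general position in $\rr^{D}$, an $r$-part Tverberg partition needs at least $(r-1)(D+1)+1$ points. The reason is that the affine hulls of the parts have dimensions $|P_g|-1$, and they meet only if $\sum_g (D-|P_g|+1)\le D$. With $D=d-1$ this gives $(r-1)d+1=n+1$, not $(r-1)(d-1)+1$. This is Tverberg's number in $\rr^{d-1}$, as quoted in the introduction. It is also exactly the length of a rainbow sequence with $d$ overlapping blocks, and it is the Bukh--Loh--Nivasch parameter $T(d-1,r)$.

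Your own quoted property already closes the argument:
\begin{itemize}
\item Once you pass to a minimal Tverberg subpartition, it corresponds to a rainbow subsequence, so it has length $n+1$ by definition.
\item It uses only labels from $I$, and you showed $|I|\le n+1$. Hence it uses all of $I$, and no shrinking actually occurs.
\item Listing $I$ as $i_1<\dots<i_{n+1}$ and setting $g_j=g(i_j)$ gives a rainbow sequence with $x\in\bigcap_j g_jA_{i_j}$.
\end{itemize}
That is precisely the paper's proof.

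Neither of your proposed repairs is needed, and both would fail as stated. Moving to $\rr^d$ is wrong for two reasons. First, the convexity conclusion of \cref{thm:selection-volovikov} lives in $\rr^{d-1}$; the extra coordinate $1$ in $\Phi$ only forces the parts to carry equal total weight. Second, in $\rr^d$ an $r$-part Tverberg partition in general position needs $(r-1)(d+1)+1=n+r>n+1$ points, which you do not have. Padding would require additional distinct labels $i$ with $x\in gA_i$ at the same point $x$, and the covering hypothesis does not provide them.
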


\begin{proof}
    Consider the selection structure on $W=[n+1]$, $\kk = 2^W$, and $\LL = \{W\}$.  For each $w \in W$, let $A^w_i = A_i$.  Let $X=\{x_1,\dots,x_m\}$ be a sequence of points in $\rr^{d-1}$ from the constructions by Bukh, Loh, and Nivasch (i.e., a monotone sequence in the stretched grid of $\rr^{d-1}$).  The minimal Tverberg partitions of $X$ into $|G|=r$ sets correspond exactly to the rainbow sequences for the subsets of size $n+1$.  For each $w \in W$, let $X^w = X$.  With this setup, apply \cref{thm:selection-volovikov} with $q=2$.  If any index $i$ appears more than once in a set $P_g$, delete all but one occurrence.  This does not break the convex hull intersection property in the conclusion.  The choice of points $X$ guarantees the desired rainbow sequence in the conclusion.
\end{proof}

\section{Applications to fair partitions}\label{sec:applications}

The Borsuk--Ulam theorem and Volovikov's theorem are widely applied in the context of fair partitions and mass partitions \cite{RoldanPensado2022}.  Consider, for instance, the ham sandwich theorem.  We say that a measure on $\rr^d$ is absolutely continuous if it is absolutely continuous with respect to the Lebesgue measure on $\rr^d$.

\begin{theorem}[Banach 1938 \cite{Steinhaus1938}]
    Let $d$ be a positive integer and let $\mu_1,\dots,\mu_d$ be absolutely continuous probability measures on $\rr^d$.  Then, there exists a hyperplane $H$ such that its two closed half-spaces $H^+$ and $H^-$ satisfy $\mu_i(H^+) = \mu_i(H^-)$ for all $i$.
\end{theorem}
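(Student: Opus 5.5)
The statement to prove is the ham sandwich theorem (Banach). I would prove it by the standard Borsuk--Ulam reduction, parametrizing oriented closed half-spaces of $\rr^d$ by points of $S^d$.

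For $u=(u_0,u_1,\dots,u_d)\in S^d\subset\rr^{d+1}$, set
\[
H^+(u)=\{y\in\rr^d : u_1y_1+\dots+u_dy_d\le u_0\}.
\]
When $(u_1,\dots,u_d)\neq 0$ this is a closed half-space. At the two poles $u=(\pm1,0,\dots,0)$ it is all of $\rr^d$ or the empty set. Define
\[
f:S^d\to\rr^d,\qquad f(u)_i=\mu_i(H^+(u))-\mu_i(H^+(-u)).
\]
For a nonpolar $u$, the sets $H^+(u)$ and $H^+(-u)$ are the two closed half-spaces bounded by one hyperplane. Their intersection is that hyperplane, which has $\mu_i$-measure zero by absolute continuity.

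The key steps are as follows.
\begin{enumerate}
\item The map $f$ is odd by construction.
\item The map $f$ is continuous. At nonpolar points, if $u^{(k)}\to u$, then the indicator functions of $H^+(u^{(k)})$ converge pointwise to that of $H^+(u)$ off the limiting hyperplane, which is a null set. Dominated convergence then gives $\mu_i(H^+(u^{(k)}))\to\mu_i(H^+(u))$. At the poles, $H^+(u^{(k)})$ eventually contains any given bounded ball (or misses it, at the opposite pole), and tightness of $\mu_i$ gives convergence to $1$ or $0$.
\item By the Borsuk--Ulam theorem (the $r=2$ case of Volovikov's theorem stated above), $f$ has a zero $u^*$.
\item The zero $u^*$ cannot be a pole. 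At the poles $f(u)_i=\pm 1\neq 0$ for every $i$, and since $d\ge 1$ there is at least one coordinate.
\item Hence $H=\{y: u^*_1y_1+\dots+u^*_dy_d=u^*_0\}$ is a genuine hyperplane, and its closed half-spaces $H^+=H^+(u^*)$ and $H^-=H^+(-u^*)$ satisfy $\mu_i(H^+)=\mu_i(H^-)$ for all $i$.
\end{enumerate}

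The main obstacle is the continuity of $f$ at the poles and across degenerate directions. This is exactly where absolute continuity is used: it makes boundary hyperplanes negligible. At the poles I would argue directly using tightness, as in step 2.

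One could alternatively phrase the argument through \cref{thm:main-selection-frickwellner}, but the direct Borsuk--Ulam route is the cleanest and is what I would write.
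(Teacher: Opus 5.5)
Your proof is correct. It is the standard Borsuk--Ulam reduction: the paper only quotes this classical result and gives no proof of it, but in its proof of the selection-structure ham sandwich theorem it uses the same parametrization you use, with closed half-spaces together with $\rr^d$ and $\emptyset$ placed at antipodal points of $S^d$. As a minor simplification, continuity at the poles follows from the same dominated-convergence argument as elsewhere, since there the exceptional set $\{y : u_1y_1+\dots+u_dy_d=u_0\}$ is empty, so no tightness argument is needed.
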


The general version is due to Stone and Tukey \cite{Stone1942}.  The number of measures is optimal.  If we are given $m>d$ measures on $\rr^d$, we can no longer guarantee that there exists a hyperplane simultaneously halving all of them.  However, as Frick and Wellner showed, the extensions of Fan's theorem give us interesting consequences even in this case.

We describe below the selection-structure consequences for the ham sandwich theorem.  However, the methods below have much more reach than just this application.  Almost any mass partition problem in which the space of partitions can be parametrized by $S^d$ has a selection-structure version of this form.  This includes, for example, the necklace splitting problem for two thieves (see \cite{Alon1986} for the proof using the reduction to Borsuk--Ulam).  

More generally, mass partition problems into $r\ge 2$ parts, which can be  parametrized by $G^{*n}$ where $G$ is a discrete set with $r$ elements, can be extended using \cref{thm:selection-volovikov}.  This includes, for example, the general necklace splitting problem \cite{Alon1987} and results using hyperplane arrangements with fixed directions \cite{Karasev2016}.  One important distinction is that the usual reduction from arbitrary $r$ to prime-powers does not obviously preserve the additional selection-structure constraints.  Due to this, we restrict our results to the prime-power case.

Let us first state and prove the selection-structure ham sandwich theorem.

\begin{theorem}[Selection-structure ham sandwich theorem]
    Let $m,d$ be positive integers, let $W$ be a finite set, and $\Sigma = (\kk,\LL)$ be a $d$-admissible selection structure on $W$.  For each $w \in W$, let $\{x^w_1,\dots,x^w_m\} \subset \rr^{d-1}$ be a set of $m$ points.  Let $\mu_1,\dots,\mu_m$ be absolutely continuous probability measures on $\rr^d$.  Assume there does not exist a hyperplane simultaneously bisecting all measures $\mu_i$ with $i \in [m]$.

    For each $w \in W$, let $G_w$ be a family of hyperplanes, closed under the standard topology of the affine Grassmannian of hyperplanes in $\rr^d$.  Suppose that for every $W' \subseteq W$ such that $W \setminus W' \not\in \LL$ and every hyperplane $H$ in $\rr^d$, we have $H \in \bigcup_{w \in W'}G_w$.

   Then, there exist a closed half-space $H^+$ and a facet $B \in \kk$ such that $H = \partial H^+ \in \bigcap_{w \in B}G_w$ and a function $f: B \to [m]$ with the following properties:

   \[
   |\mu_{f(w)}(H^+) - 1/2| = \max_{j \in [m]} |\mu_j(H^+) - 1/2| \qquad \mbox{ for all }w \in B. 
   \]
   Moreover, if we let $P_+ = \{w \in B: \mu_{f(w)}(H^+) - 1/2 >0\}$ and $P_- = \{w \in B: \mu_{f(w)}(H^+) - 1/2 <0\}$, then $f(P_+) \cap f(P_-) = \emptyset$ and
   \[
   \conv\{x^w_{f(w)} : w \in P_+\} \cap \conv\{x^w_{f(w)} : w \in P_-\} \neq \emptyset.
   \]
\end{theorem}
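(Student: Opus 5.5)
We derive the statement from \cref{thm:main-selection-frickwellner} by the standard Borsuk--Ulam parametrization of oriented half-spaces.

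\textbf{Parametrization.} Identify $S^d$ with the space of oriented closed half-spaces of $\rr^d$, including the two degenerate ``half-spaces'' $\rr^d$ and $\emptyset$. Concretely, a unit vector $u=(a,b)\in \rr^d\times \rr$ gives $H^+_u=\{y : \langle a,y\rangle \le b\}$. Then $-u$ gives the complementary half-space, up to a null boundary. Define $g: S^d\to \rr^m$ by $g(u)_j=\mu_j(H^+_u)-1/2$. Absolute continuity makes $g$ continuous, and it makes $g$ odd. At the two degenerate points every coordinate equals $\pm 1/2$, so $g$ vanishes nowhere there. At every other point $g$ is nonzero by the hypothesis that no hyperplane bisects all $\mu_i$. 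So $\|g(u)\|_\infty>0$ for every $u\in S^d$.

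\textbf{Construction of the sets.} Let $\tilde G_w\subseteq S^d$ be the set of $u$ whose boundary hyperplane lies in $G_w$. We must decide how to treat the degenerate points; adding them to every $\tilde G_w$ is the natural first choice. For $w\in W$ and $i\in[m]$, set
\[
A^w_i=\{u\in \tilde G_w : g(u)_i=\|g(u)\|_\infty\}.
\]
Closedness of $A^w_i$ follows from the closedness of $G_w$ in the affine Grassmannian, together with a check that the closure of nondegenerate hyperplanes in $S^d$ only adds the degenerate points.

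Since $-A^w_i=\{u\in -\tilde G_w=\tilde G_w: g(u)_i=-\|g(u)\|_\infty\}$ and $g(u)\ne 0$, we get $A^u_i\cap(-A^w_i)=\emptyset$ for all $u,w$. This is stronger than the second hypothesis of \cref{thm:main-selection-frickwellner}.

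For the covering hypothesis, take $W'$ with $W\setminus W'\notin\LL$ and $u\in S^d$. By assumption, the boundary hyperplane of $u$ lies in some $G_w$ with $w\in W'$. Choose $i$ with $|g(u)_i|=\|g(u)\|_\infty$; then $u\in A^w_i\cup(-A^w_i)$.

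\textbf{Applying the theorem.} \Cref{thm:main-selection-frickwellner} produces $x\in S^d$ and sets $P_+,P_-$ with the following properties. The projection $\pi_1$ is injective on $P_+\cup P_-$ with image a facet $B$ of $\kk$. The label sets satisfy $\pi_2(P_+)\cap\pi_2(P_-)=\emptyset$. Finally, $x\in A^w_i$ for $(w,i)\in P_+$ and $x\in -A^w_i$ for $(w,i)\in P_-$.

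Define $f(w)=i$ when $(w,i)\in P_+\cup P_-$, and let $H^+=H^+_x$. Membership in $A^w_i$ gives $H\in G_w$ for every $w\in B$. It also gives $\mu_{f(w)}(H^+)-1/2=\pm\|g(x)\|_\infty$, with sign $+$ exactly on $P_+$ and $-$ exactly on $P_-$. The strict signs hold because $\|g(x)\|_\infty>0$. This yields the max identity, the set identities for $P_\pm$, the condition $f(P_+)\cap f(P_-)=\emptyset$, and the convex hull intersection.

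\textbf{Main obstacle.} The delicate step is the degenerate points $x=(0,\pm1)$, where $H^+$ is $\rr^d$ or $\emptyset$ and $H$ is not a genuine hyperplane. There $g(x)=\pm(1/2,\dots,1/2)$, so only $P_+$ or only $P_-$ is nonempty. But \cref{thm:main-selection-frickwellner} guarantees that both are nonempty, with intersecting convex hulls, so $x$ cannot be degenerate and $H$ is a genuine hyperplane.

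The point to verify carefully is therefore closedness of $A^w_i$ near the poles, while keeping the covering condition valid there. Including both poles in every $\tilde G_w$ handles both at once.
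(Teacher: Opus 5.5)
Your proposal is correct and follows essentially the same route as the paper. Both use the antipodal parametrization of half-spaces by $S^d$, the same sets $A^w_i$ (putting both poles into every $\tilde G_w$ reproduces exactly the paper's convention of adjoining $\rr^d$ to each $A^w_i$, since $\emptyset$ never satisfies the max condition), and a direct application of \cref{thm:main-selection-frickwellner}, while you are more careful than the paper on two points it leaves implicit: the closedness of $A^w_i$ near the poles, and ruling out a degenerate $x$ via the nonemptiness of both $P_+$ and $P_-$.
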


\begin{proof}
    The space of closed half-spaces in $\rr^d$ together with $\rr^d$ and $\emptyset$ can be parametrized by $S^d$ so that complementary half-spaces correspond to antipodal points.  For $w \in W$ and $i \in [m]$, let $A^w_i$ be the union of $\{\rr^d\}$ with the set of half-spaces $H^+$ such that their bounding hyperplane is in $G_w$ and such that
    \[
    \mu_i (H^+) - 1/2 = \max_{j \in [m]} |\mu_j(H^+) -1/2|.
    \]
    Let us show that the sets $A^w_i$ satisfy the conditions of \cref{thm:main-selection-frickwellner}.  First, given $W' \subseteq W$ such that $W \setminus W' \not\in \LL$ and a half-space $H^+$, let $H$ be its bounding hyperplane.

    We know that $H \in \bigcup_{w \in W'}G_w$.  For this hyperplane, let $i$ be an index such that $|\mu_i (H^+) - 1/2| = \max_{j \in [m]}|\mu_j (H^+) - 1/2|$.  If $\mu_i(H^+) > 1/2$, then $H^+ \in A^w_i$.  Otherwise, $H^+ \in - A^w_i$.  This implies the first property for these sets.

    To prove the second property, assume for the sake of a contradiction that we have $\{u,w\} \in \kk$ and $i \in [m]$ such that $A^u_i \cap -A^w_i \neq \emptyset$.  A half-space $H^+$ in this intersection would require that its bounding hyperplane is in $G_u \cap G_w$.  Additionally, to be in $A^u_i$ we need $\mu_{i}(H^+) -1/2 = \max_{j \in [m]}|\mu_j(H^+) - 1/2|$.  However, to be in $-A^w_i$ we would need that for $H^-$, the other closed half-space of $H$, we have
    \(\mu_i (H^-) - 1/2 = \max_{j \in [m]}|\mu_j (H^-) - 1/2|\).  This is impossible since $\mu_i (H^-) - 1/2 = -(\mu_i (H^+) - 1/2)$, so they cannot both be positive.

    We can therefore apply \cref{thm:main-selection-frickwellner}.  The conclusion of \cref{thm:main-selection-frickwellner} corresponds exactly to the desired conclusion.
\end{proof}


To illustrate the applications of \cref{thm:selection-volovikov}, we state and prove the selection-structure necklace splitting theorem.  The interpretation is that $r$ thieves want to split an open necklace with $m$ kinds of pearls.  They are willing to make $(r-1)d$ cuts to the necklace, for some integer $d$.  If $d=m$, the necklace splitting theorem states that we can find a set of cuts and a distribution of the resulting segments so that each thief gets the same amount of each kind of pearl.  Our result says that for $d<m$, we can still find cuts and a distribution that carry interesting information about the amount of pearls that some thieves receive.

We work with the continuous version, in which instead we have $m$ absolutely continuous probability measures on the interval $[0,1]$.  Given a group $G$ with $r$ elements, we can use $G$ to index the thieves.  Consider $Y$ to be a partition of $[0,1]$ into $n+1$ intervals for some $n \ge 0$, together with a distribution of the intervals among the $r$ thieves.  For $g \in G$, we can define $gY$ to be the same partition, but the parts that were given to thief $h$ now go to thief $(gh)$.  With this action on the distributions, we are now ready to state our version of the necklace splitting theorem.

The space of partitions of $[0,1]$ into $n+1$ intervals, each carrying one of $r$ possible labels, is naturally parametrized by $G^{*(n+1)}$, where $G$ is a discrete set with $r$ elements.  The diagonal action of $G$ on $G^{*(n+1)}$ is equivalent to the action described above.

This is because the element $\sum_{i=1}^{n+1}\alpha_i g_i \in G^{*(n+1)}$ corresponds to the partition into $n+1$ intervals, where the $i$-th interval has length $\alpha_i$ and label $g_i \in G$.  The label of $0$-length intervals does not matter.  Under this parametrization, we can define a closed or open set of partitions with labels.

\begin{theorem}
   Let $m,d, a$ be positive integers, let $p$ be a prime, and $r=p^a$, $n=(r-1)d$, $G = (\zz_p)^a$.  Let $W$ be a finite set and $\Sigma = (\kk,\LL)$ be an $n$-admissible selection structure on $W$.  For each $w \in W$, let $\{x^w_1,\dots,x^w_m\} \subset \rr^{d-1}$ be a set of points.  Let $\mu_1,\dots,\mu_m$ be absolutely continuous probability measures on the interval $[0,1]$.  Suppose no such labeled partition gives every thief exactly $1/r$ of every measure.

  Suppose we are given, for each  $w \in W$, a closed set $X_w$ of partitions of $[0,1]$ into $n+1$ intervals, each with a label in $G$.  Additionally, for each $W' \subseteq W$ such that $W \setminus W' \not\in \LL$, we have that every labeled partition is contained in $\bigcup_{w \in W'} \bigcup_{g \in G} g X_w$.

  Then, there exists a facet $B$ of $\kk$ and a partition of $[0,1]$ with a distribution $Y$, and functions $f: B \to [m]$, $h: B \to G$ so that the following holds.  Denote by $Y_g$ the union of the set of intervals assigned to thief $g$ by $Y$. 
  
  First, $Y \in \bigcap_{w \in B}\left( \bigcup_{g\in G}gX_w\right)$.  Second, for every $w \in B$ we have
  \[
  \mu_{f(w)}(Y_{h(w)}) = \max_{\substack{i \in [m]\\g \in G}}\left(\mu_i(Y_g) \right)>\frac{1}{r}.
  \]
  Third, if we let $P_g = \{w \in B: h(w) = g\}$, then $P_g$ is nonempty for all $g$, $\bigcap_{g \in G}f(P_g) =\emptyset$ and 
  \[
  \bigcap_{g \in G}\conv \{x^w_{f(w)}: w \in P_g\}\neq \emptyset.
  \]
\end{theorem}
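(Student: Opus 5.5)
We reduce the statement to \cref{thm:selection-volovikov} with $q=r$, in the same way the selection-structure ham sandwich theorem was reduced to \cref{thm:main-selection-frickwellner}. Identify the space of labeled partitions with $G^{*(n+1)}$, as described before the statement. For $Y$ a labeled partition, let $M(Y)=\max_{i\in[m],g\in G}\mu_i(Y_g)$. For $w\in W$ and $i\in[m]$ define
\[
A^w_i=\{Y\in X_w : \mu_i(Y_e)=M(Y)\},
\]
where $e$ is the identity of $G$. Since each $\mu_i$ is absolutely continuous, $Y\mapsto \mu_i(Y_g)$ is continuous on $G^{*(n+1)}$, so $M$ is continuous and $A^w_i$ is closed. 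The relabeling action gives $(gY)_{gh}=Y_h$. It follows that $gA^w_i=\{Y\in gX_w:\mu_i(Y_g)=M(Y)\}$.

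We next check the two hypotheses of \cref{thm:selection-volovikov}. For the covering condition, take $W'$ with $W\setminus W'\notin\LL$ and a partition $Y$. By assumption $Y\in g'X_w$ for some $w\in W'$ and $g'\in G$. Since $G$ acts freely on labels, $Y\in gX_w$ for each $g$ only through the relabeling $g^{-1}Y\in X_w$. To cover $Y$, we need $\mu_i(Y_{g})=M(Y)$ for the specific $g$ with $g^{-1}Y\in X_w$, and this can fail. So we should instead use $A^w_i=\{Y : \mu_i(Y_e)=M(Y),\ Y\in\bigcup_{g}gX_w\}$, which is still closed because $\bigcup_g gX_w$ is a finite union of closed sets and $G$-invariant. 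With this choice, $gA^w_i=\{Y\in \bigcup_h hX_w:\mu_i(Y_g)=M(Y)\}$. Choosing $(i,g)$ that attains $M(Y)$ then places $Y$ in $gA^w_i$, which gives the covering condition. For the emptiness condition with $q=r$, suppose $Y\in\bigcap_j g_jA^{w_j}_i$ with the $g_j$ ranging over all of $G$. Then $\mu_i(Y_g)=M(Y)$ for every $g$. Summing over $g$ gives $M(Y)=1/r$. Hence every $\mu_j(Y_g)\le 1/r$, and since $\sum_g\mu_j(Y_g)=1$, every thief receives exactly $1/r$ of every measure. This contradicts the hypothesis.

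\cref{thm:selection-volovikov} then yields $Y$ and sets $P_g$, and we decode them as follows. We take $B=\pi_1(\bigcup P_g)$, which is a facet, and define $f(w)$ and $h(w)$ by $(w,f(w))\in P_{h(w)}$; these are well defined by injectivity of $\pi_1$. Membership $Y\in h(w)A^w_{f(w)}$ gives $Y\in\bigcup_g gX_w$ and $\mu_{f(w)}(Y_{h(w)})=M(Y)$. To see $M(Y)>1/r$, note that $M(Y)\ge 1/r$ always, and equality would again force an equal split, which the hypothesis forbids. The remark after \cref{thm:selection-volovikov} gives that each $P_g$ is nonempty. The $\pi_2$ condition with $q=r$ gives $\bigcap_g f(P_g)=\emptyset$, and the convex hull condition transfers directly.

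The main obstacle is making the sets $A^w_i$ both closed and compatible with the covering hypothesis. This is why membership in $X_w$ is replaced by its $G$-orbit union, and the conclusion only asks that $Y\in\bigcup_g gX_w$. Closedness and equivariance of the parametrization at zero-length intervals also need checking, but they follow from the continuity of $Y\mapsto\mu_i(Y_g)$.
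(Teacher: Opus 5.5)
Your proposal is correct and follows essentially the same route as the paper: after your self-correction, your sets $A^w_i=\{Y\in\bigcup_g gX_w:\mu_i(Y_e)=M(Y)\}$ are exactly the paper's, since your $M$ differs only by omitting the shift by $1/r$. The covering check, the $q=r$ emptiness argument by summing over $G$, and the decoding of $B,f,h$ all match the paper. In a final write-up, you should drop the abandoned first definition that uses $X_w$ instead of its $G$-orbit union.
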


\begin{proof}
    Let $e$ denote the identity element of $G$.  For
    $Y\in G^{*(n+1)}$, $i\in[m]$, and $g\in G$, let $Y_g$ be the union of all intervals with label $g$.  Define
    \[
    M(Y)=
    \max_{\substack{i\in[m]\\g\in G}}\left(\mu_i(Y_g)-\frac{1}{r}\right).
    \]
    The absolute continuity of the measures implies that the
    functions $\varphi_i^g$ are continuous.  Therefore, $M$ is
    continuous and $G$-invariant.

    For every $i\in[m]$, we have
    \[
    \sum_{g\in G}\left( \mu_i(Y_g)-\frac{1}{r} \right)=0.
    \]
    In particular, $M(Y)\ge 0$.  If $M(Y)=0$, then $Y$ would be
    an exact fair splitting, contrary to the assumptions.  Therefore,
    \[
    M(Y)>0
    \qquad\mbox{for every }Y\in G^{*(n+1)}.
    \]

    For every $w\in W$ and $i\in[m]$, let
    \[
    A_i^w=
    \left\{Y\in\bigcup_{g\in G}gX_w: \mu_i(Y_e)-\frac{1}{r}=M(Y)\right\}.
    \]
    Since $\bigcup_{g\in G}gX_w$ is a finite union of closed sets, it is
    closed.  Therefore, every $A_i^w$ is a closed subset of
    $G^{*(n+1)}$.

    Let us show that these sets satisfy the conditions of
    \cref{thm:selection-volovikov} with $q=r$.  First, let
    $W'\subseteq W$ satisfy $W\setminus W'\notin\LL$, and let
    $Y\in G^{*(n+1)}$.  By the covering assumption, there is some
    $w\in W'$ such that $Y\in\bigcup_{g \in G}g{X}_w$.  Choose $i\in[m]$ and
    $h\in G$ such that \(\mu_i(Y_h)-\frac{1}{r}=M(Y)\).
    We have 
    $h^{-1}Y\in\bigcup_{g\in G}g{X}_w$.  Moreover,
    \[
    \mu_i((h^{-1}Y)_e-\frac{1}{r}
    =
    \mu_i(Y_h)-\frac{1}{r}
    =
    M(Y)
    =
    M(h^{-1}Y).
    \]
    Consequently, $h^{-1}Y\in A_i^w$, and therefore
    $Y\in hA_i^w$.  This proves that
    \(
    G^{*(n+1)}
    =
    \bigcup_{w\in W'}\bigcup_{i=1}^m\bigcup_{g\in G}gA_i^w
    \).

    Now let $w_g$, for $g\in G$, be distinct elements of $W$ such
    that
    \(
    \{w_g:g\in G\}\in\kk
    \),
    and let $i\in[m]$.  Suppose, for the sake of a contradiction,
    that there is a configuration
    \[
    Y\in\bigcap_{g\in G}gA_i^{w_g}.
    \]
    For every $g\in G$, we then have $g^{-1}Y\in A_i^{w_g}$, and
    therefore \(
    \mu_i(Y_g)-\frac{1}{r}=M(Y)
    \).
    It follows that
    \[
    0
    =
    \sum_{g\in G}
    \left(\mu_i(Y_g)-\frac{1}{r}\right)
    =
    rM(Y),
    \]
    contradicting the fact that $M(Y)>0$.  Therefore,
    \[
    \bigcap_{g\in G}gA_i^{w_g}=\emptyset.
    \]
    This proves the second condition of
    \cref{thm:selection-volovikov} for $q=r$.

    We can therefore apply \cref{thm:selection-volovikov}.  Let
    $Y\in G^{*(n+1)}$ and let $Q_g\subseteq W\times[m]$, for
    $g\in G$, be the sets obtained from that theorem.  Let
    \[
    B=\pi_1\left(\bigcup_{g\in G}Q_g\right).
    \]
    Then, $B$ is a facet of $\kk$, and the injectivity of $\pi_1$
    allows us to define functions $f:B\to[m]$ and $h:B\to G$ by
    declaring that
    \[
    f(w)=i
    \qquad\mbox{and}\qquad
    h(w)=g
    \]
    whenever $(w,i)\in Q_g$.

    Let $P_g=\{w \in B: h(w) = g\}$.  If $w\in P_g$, then
    $(w,f(w))\in Q_g$, and the conclusion of
    \cref{thm:selection-volovikov} gives \(Y\in gA_{f(w)}^w\).  This implies
    $Y\in\bigcup_{g\in G}gX_w$.  Therefore,
    \[
    Y\in\bigcap_{w\in B}\bigcup_{g\in G}gX_w.
    \]
    We also have \( \mu_{f(w)}(Y_g)-\frac{1}{r} = M(Y) \)
    for every $w\in P_g$, which proves the second conclusion.

    Finally, notice that
    \(
    \pi_2(Q_g)=f(P_g)\).  
    The conclusion of \cref{thm:selection-volovikov} for $q=r$ gives
    \(
    \bigcap_{g\in G}f(P_g)=\emptyset\).  Its convexity conclusion implies
    \[
    \bigcap_{g\in G}
    \conv\{x^w_{f(w)}:w\in P_g\}\neq\emptyset.
    \]
\end{proof}

\section*{Acknowledgments}

The author thanks Florian Frick and Zoe Wellner for their comments.


\end{document}